\documentclass[11pt]{amsart}
\usepackage{amsmath,amsthm,amsfonts,amssymb}
\usepackage{enumerate}
\usepackage{hyperref}
\usepackage[T1]{fontenc}
\usepackage{bbm}
\usepackage{color}

\newtheorem{thm}{Theorem}[section]
\newtheorem{lem}[thm]{Lemma}

\newtheorem{cor}[thm]{Corollary}
\newtheorem{pro}[thm]{Proposition}

\theoremstyle{definition}

\theoremstyle{Remark}

\newcommand{\Ind}[1]{\mathbbm{1}_{#1}}
\newcommand{\id}{e_{\BA}}
\newcommand{\ta}{a_v}
\newcommand{\tc}{c_{v}}
\newcommand{\C}{\mathbb{C}}

\newcommand{\mF}{\mathcal{F}}

\newcommand{\R}{\mathbb{R}}

\newcommand{\st}{\cH}

\newcommand{\tm}{\tilde{m}}

\newcommand{\tK}{\tilde{K}}

\newcommand{\mbc}{[\check\bc^{-1}m_B]}

\newcommand{\roq}{\rho_q}
\newcommand{\rop}{\rho_p}

\DeclareMathOperator{\Ima}{Im}

\DeclareMathOperator{\loc}{loc}
\DeclareMathOperator{\glo}{glo}
\DeclareMathOperator{\Lip}{Lip}

%
%
%
%

\theoremstyle{plain}
\newtheorem{theorem}{Theorem}[section]
\newtheorem*{theorem*}{Theorem}

\theoremstyle{remark}
\newtheorem{remark}[theorem]{Remark}
\numberwithin{equation}{section}
\theoremstyle{definition}

\numberwithin{equation}{section}
\theoremstyle{Basic assumptions}
\newtheorem{Basic assumptions}[theorem]{Basic assumptions}
\theoremstyle{notation}


\renewcommand\Re{\operatorname{\mathrm{Re}}}
\renewcommand\Im{\operatorname{\mathrm{Im}}}

\newcommand\rest[1]{\kern-.1em
          \lower.5ex\hbox{$\scriptstyle #1$}\kern.05em}

\renewcommand\mod[1]{\vert{#1}\vert}
\newcommand\bigmod[1]{\bigl\vert{#1}\bigr|}

\newcommand\bignorm[2]{\,{\big\Vert{#1}\big\Vert_{#2}}}

\newcommand{\ap}{|a|}

\newcommand\wrt{\,\text{\rm d}}

\newcommand\bC{\mathbf{C}}

\newcommand\BA{\mathbb{A}}

\newcommand\BC{\mathbb{C}}

\newcommand\BG{\mathbb{G}}

\newcommand\BK{\mathbb{K}}
\newcommand\BN{\mathbb{N}} \newcommand\OBN{\overline{\mathbb{N}}}
\newcommand\BR{\mathbb{R}} 

\newcommand\BS{\mathbb{S}}

\newcommand\BX{\mathbb{X}}

\newcommand\fra{\mathfrak{a}}

                               \newcommand\frg
{\mathfrak{g}}
\newcommand\cH{\mathcal{H}}

\newcommand\frk{\mathfrak{k}}

\newcommand\cM{\mathcal{M}}

\newcommand\frn{\mathfrak{n}}

\newcommand\frp{\mathfrak{p}}

\newcommand\al{\alpha}
\newcommand\be{\beta}
    \newcommand\Ga{\Gamma}
\newcommand\de{\delta}
  \newcommand\vep{\varepsilon}
\newcommand\ka{{\kappa}}
\newcommand\la{\lambda}   
\newcommand\om{\mathcal E}

\newcommand\vp{\varphi}

\newcommand\OV{\overline}
\newcommand\funnyk{k\hbox to 0pt{\hss\phantom{g}}}

\newcommand\lu[1]{L^1(#1)}
\newcommand\lp[1]{L^p(#1)}

\newcommand\ld[1]{L^2(#1)}

\newcommand\Cvp[1]{Cv_p(#1)}

\newcommand\Mp[1]{\mathcal M_p(#1)}

\newcommand\bc{\mathbf{c}}

\newcommand\wt{\widetilde}
\newcommand\whH{\widehat{\phantom{G}}\hbox to 0pt{\hss $H$}}

\newcommand\emspace{\hbox to 6pt{\hss}}




{\mathsurround=0pt}

\newcommand\rmii{\hbox{\rm (ii)}}

\newcommand\e{\mathrm{e}}

\newcommand\TWp{T_p}
\newcommand\TWq{T_q}
\newcommand\TWo{T_1}

\newcommand\alza[1]{\kern+.13em
          \raise.35ex\hbox{$\scriptstyle #1$}}
\newcommand\alzaprima[1]{\kern-.25em
          \raise.2ex\hbox{$\scriptstyle #1$}}

\DeclareSymbolFont{EUEX}{U}{euex}{m}{n}

\DeclareSymbolFont{euexlargesymbols}{U}{euex}{m}{n}
\DeclareMathSymbol{\intop}{\mathop}{euexlargesymbols}{"52}
     \def\int{\intop\nolimits}

\DeclareSymbolFont{euexsymbols}     {U}{euex}{m}{n}

\title[Improved multiplier theorems on symmetric spaces]{Improved multiplier theorems on rank one noncompact symmetric spaces}

\author{B\l a\.{z}ej Wr\'obel}
\address{Institute of Mathematics, Polish Academy of Sciences, \'Sniadeckich 8,	00–656 Warszawa, Poland \& Institute of Mathematics, University of  Wroc\l aw, pl. Grunwaldzki 2, 50-384 Wroc\l aw, Poland}
\email{blazej.wrobel@math.uni.wroc.pl}

\subjclass[2020]{43A85, 43A32}

\keywords{spherical multiplier, noncompact symmetric space, transference}

\begin{document}

\begin{abstract}
We prove  multiplier theorems on rank one noncompact symmetric spaces which improve aspects of existing results. A common theme of our main results is that we partially drop specific assumptions on the multiplier function such as a Mikhlin-H\"ormander condition. These are replaced by a requirement that parts of the multiplier function on the critical $p$ strip give rise to bounded Fourier multipliers.
\end{abstract}

\dedicatory{\large{In memory of Jacek Zienkiewicz.}}

\maketitle
\numberwithin{equation}{section}
\section{Introduction}

\numberwithin{equation}{section}

A celebrated result of S.G.~Mikhlin \cite{Mikhlin} and L.~H\"ormander \cite{Ho} states that if
$B$ is a bounded translation invariant operator on $\ld{\BR^n}$
and the Fourier transform $m_B$ of its convolution kernel
satisfies the following Mikhlin-H\"ormander type conditions
\begin{equation*}
\mod{D^I m_B (\xi)}
\leq C \, \mod{\xi}^{-\mod{I}}
\qquad  \xi \in \BR^n \setminus \{0\}
\end{equation*}
for all multi-indices $I$ of length $\leq [\!\![n/2]\!\!]+1$,
then $B$ extends to an operator bounded on $\lp{\BR^n}$ for all
$p\in (1,\infty)$.
The operator $B$ is usually referred to as the Fourier
multiplier operator associated to the multiplier $m_B$.

The problem of extending the classical Mikhlin-H\"ormander multiplier theorem to the setting
of symmetric spaces of the noncompact type has been considered by a number of  authors
\cite{A1,AL,CS,GMM,I1,I2,MV,ST}. We refer the interested reader to these papers an the references therein. Our article improves aspects of the works of Giulini, Mauceri, and Meda \cite{GMM}, and Ionescu \cite{I2}.

Suppose that $\BX= \BG/\BK$ is an $n$-dimensional symmetric space of the noncompact type. Throughout the paper we focus on the case  when $\BX$ has real rank one. 
It is well known that if $B$ is a $\BG$-invariant bounded linear operator on $\ld{\BX}$, 
then there exists a $\BK$--bi-invariant tempered distribution $k_B$ on $\BG$ such that 
$Bf = f*k_B$ for all $f$ in $\ld{\BX}$ (see \cite[Prop.~1.7.1 and Ch.~6.1]{GV} for details).
We call $k_B$ the kernel of $B$.
We denote its spherical Fourier transform~$\wt k_B$
by $m_B$ and call it the spherical multiplier associated to $B$.
Then $m_B$ is a bounded Weyl-invariant function. In the rank one case Weyl-invariance means that $m_B$ is even. 

A well known result of J.L.~Clerc and E.M.~Stein \cite{CS} implies that 
if $B$ is $\BG$-invariant bounded linear operator on $\lp{\BX}$ for some $p$ in $(1,\infty)\setminus\{2\}$, then $m_B$
continues analytically to a bounded Weyl-invariant function in the strip  $$\TWp := \{\zeta \in \BC:  \bigmod{\Im \zeta} < \rop\},$$ where $$\rop:= \de(p)|\rho|\quad\textrm{with}\quad \de(p):=\mod{2/p-1}$$ and $\rho$ is half the sum of all positive roots with multiplicity. The Clerc-Stein result naturally rises the question which conditions on $m_B$ will guarantee the $\lp{\BX}$ boundedness of $B.$

The best sufficient Mikhlin-H\"ormander-type condition available in the literature
is due to A.D.~Ionescu \cite[Theorem 8]{I2}. He proved that if 
$p$ is in $(1,\infty)\setminus\{2\}$, and $m_B$ is a bounded Weyl-invariant holomorphic function in $\TWp$ 
that satisfies the Mikhlin-H\"ormander condition
$$
\bigmod{\partial^jm_B(\zeta)}
\leq C\, \Big(\min \big[\mod{\zeta-i\rop},\mod{\zeta+i\rop}\big]\Big)^{-j},
\qquad  \zeta \in \TWp,
$$
for $j=0,1,\ldots,N$, with $N$ large enough,
then the associated multiplier operator is bounded on $\lp{\BX}.$  Analyzing Ionescu's proof one sees that it is sufficient to take $N>(n+3)/2.$

A different condition that guarantees the $\lp{\BX}$ boundedness of $B$ was established by Giulini, Mauceri, and Meda \cite[Theorem 3.1]{GMM}.  Let 
\begin{equation}
	\label{eq:omdef}
	\omega(\la)=(\la^2+4\rho^2)^{(n-1)/4},\qquad \la\in \TWo,
	\end{equation} 
and note that $\omega$ is a smooth weight of polynomial growth of order $\la^{(n-1)/2}$ at infinity. It was justified in \cite{GMM} that if  $m_B$ is a bounded Weyl-invariant holomorphic function in $\TWp$ for which  $(\omega m_B)(\cdot +i\rho)$ is an $L^p$ Fourier multiplier,  then $m_B$ defines a bounded operator on $\lp{\BX}$.   
  
The purpose of this work is to prove results which enhance aspects of both  \cite{GMM} and \cite{I2}. Our first theorem imposes the assumption (1) on $\omega m_B$ on the correct strip $T_p$ at the prize of (2) - a Mikhlin-H\"ormander condition on a smaller strip $T_q.$  In  \cite[Theorem 3.1]{GMM} the authors did not need (2). On the other hand (1) was assumed on strips larger than $T_p,$ mostly on the largest strip $T_1.$ 

\begin{theorem} \label{t: main} 

Let $p$ is in $(1,\infty)\setminus \{2\}$ and $N>(n+3)/2$. Assume that $m_B$ is a bounded Weyl-invariant holomorphic function in $T_p$ such that
\begin{enumerate}
	\item $(\omega m_B)(\cdot + i \rop)$ is an $\lp{\R}$ Fourier multiplier,
	\item  For some $q\in [p,p']$  which satisfies $|\rho| |1/p-1/q|<1$ we have
$$
\bigmod{\partial^jm_B(\zeta)}
\leq C\, \Big(\min \big[\mod{\zeta-i\roq},\mod{\zeta+i\roq}\big]\Big)^{-j}
\qquad  j=0,\ldots, N,\quad \zeta \in \TWq.
$$
\end{enumerate}
Then $B$ extends to a bounded operator on $\lp{\BX}$.
\end{theorem}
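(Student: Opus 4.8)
The plan is to combine the transference philosophy behind the Giulini--Mauceri--Meda theorem with an interpolation/analytic-family argument that exploits the extra Mikhlin--H\"ormander bound on the smaller strip $T_q$. The starting point is the observation that assumption (1) alone, by the (essentially sharp) result of \cite{GMM}, already forces boundedness of $B$ on $L^p$ \emph{provided} we can place ourselves at the edge of the correct strip; the role of (2) is to absorb the discrepancy between the Clerc--Stein strip $T_p$ and the strip $T_q$ on which we actually have pointwise derivative control. First I would fix the geometry: in rank one, write the spherical Plancherel measure as $|\mathbf c(\la)|^{-2}\,d\la$, recall that $|\mathbf c(\la)|^{-2}$ behaves like $|\la|^{d-1}$ for large $|\la|$ (here $d=n$ after the usual reduction) and is smooth and even, and recall that the weight $\omega(\la)=(\la^2+4\rho^2)^{(n-1)/4}$ is, up to a harmless smooth elliptic symbol of order zero, a square root of $|\mathbf c(\la)|^{-2}$. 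Thus the hypothesis ``$(\omega m_B)(\cdot+i\rop)$ is an $L^p(\R)$ Fourier multiplier'' is exactly the statement that the Euclidean-side object built from $m_B$ on the boundary of $T_p$, weighted by the half-density, is a genuine multiplier.

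The core step is an analytic interpolation in the spirit of Stein--Weiss/Ionescu. I would introduce the family
$m_z(\zeta) := m_B(\zeta)\,\Phi_z(\zeta)$, $z$ in a strip of $\C$, where $\Phi_z$ is an explicit ``analytic weight'' that (a) at one endpoint $z=z_0$ equals a constant and (b) at the other endpoint $z=z_1$ interpolates the chirp/localization that lets the $T_q$-Mikhlin--H\"ormander estimate of hypothesis (2) feed into Ionescu's $L^q$ multiplier theorem (\cite[Theorem 8]{I2} as quoted in the excerpt, with $N>(n+3)/2$). Concretely, one splits $m_B = m_B\,\chi_{\mathrm{loc}} + m_B\,\chi_{\mathrm{glo}}$ using a smooth even cutoff separating $|\zeta|\lesssim 1$ (a neighbourhood of the two singular points $\pm i\roq$, resp.\ $\pm i\rop$) from $|\zeta|\gtrsim 1$. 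On the global piece the derivative bounds of (2) are uniform, the weight $\omega$ is comparable to $|\mathbf c|^{-1}$, and so after dividing by $\omega$ one checks that $m_B\,\chi_{\mathrm{glo}}/\omega$ composed with the boundary value at height $\rop$ still yields an $L^p$ multiplier whose norm we control by (1); transference through the $K$-bi-invariant structure (the Herz principle / the Clerc--Stein transference used in \cite{GMM}) then gives $L^p(\BX)$-boundedness of the global part. The local part, supported near the singular points, is handled by the analytic family: one interpolates between $L^q$-boundedness of the analytically continued local multiplier (this is where (2) with $N>(n+3)/2$ and Ionescu's theorem enter, and where $|\rho||1/p-1/q|<1$ guarantees that the shift of contour from height $\roq$ to height $\rop$ stays inside the domain of holomorphy and keeps the exponential weights integrable) and the $L^2$-boundedness coming from $\|m_B\|_\infty<\infty$, landing on $L^p$ at the interpolation parameter determined by $1/p$.

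I would organize the write-up as: (i) reduce to $p<2$ by duality and Weyl-invariance; (ii) the cutoff decomposition $m_B = m^{\mathrm{glo}} + m^{\mathrm{loc}}$; (iii) the global estimate, which is a direct transference argument using (1) and the smoothness/decay of $|\mathbf c|^{-2}$ and $\omega$, essentially reproducing the GMM mechanism but only on $T_p$; (iv) the local estimate via the analytic family $\{m_z^{\mathrm{loc}}\}$, with the two boundary estimates ($L^2$ from boundedness, $L^q$ from (2)$+$Ionescu) and Stein interpolation; (v) summation of the two parts and a brief verification that the implied constants are finite. The main obstacle I expect is step (iv): constructing the analytic family so that \emph{simultaneously} the exponential weight coming from moving the contour from $\{|\Im\zeta|=\roq\}$ to $\{|\Im\zeta|=\rop\}$ is controlled (this is exactly what the hypothesis $|\rho||1/p-1/q|<1$ is for, ensuring $\roq$ and $\rop$ are close enough that no pole of the relevant $\Gamma$-type factors or no loss of holomorphy occurs), the derivative count $N>(n+3)/2$ is preserved along the family so Ionescu's theorem applies at the endpoint, and the family depends holomorphically on $z$ with at-most-admissible growth on vertical lines so that Stein's interpolation theorem is legitimately applicable. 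A secondary technical point is making the ``$\omega$ is a smooth square root of the Plancherel density, modulo an order-zero elliptic factor'' statement precise enough that composing with the $L^p$-multiplier hypothesis (1) is stable under multiplication by that factor — this is a standard but not entirely trivial symbol-calculus lemma in the one-dimensional, weighted setting, and I would isolate it as a preliminary lemma.
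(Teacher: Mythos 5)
There are two genuine gaps here, and the second is fatal on purely arithmetic grounds. First, your treatment of the ``global'' piece asserts that after dividing by $\omega$ one can run the Giulini--Mauceri--Meda transference with hypothesis (1) posed only at the height $\rop$ (the boundary of $T_p$). But that is precisely the content of the theorem, not a known input: the GMM result requires the multiplier hypothesis at the height $|\rho|$ (boundary of $T_1$, or at least of strips strictly larger than $T_p$), because their argument for the part of the kernel at infinity rests on $L^1$/Abel-transform estimates that are only available on the larger strip. The actual proof replaces this by a completely different mechanism: the kernel is split in the \emph{space} variable into local and global parts; the global part is further split on $\BS=\OBN\BA$ according to whether $[va]_+$ is comparable to $a\exp(H(v))$ or to $a^{-1}$ (this is the content of Lemma \ref{lem:CtoI}), each piece is replaced by an approximating kernel depending only on one Iwasawa coordinate, the approximants are transferred to convolution operators on $\BA$ via Proposition \ref{t: Transference principle} and controlled by hypothesis (1) alone (Proposition \ref{pro kap}), and the \emph{errors} $K_j-\tK_j$ are estimated in $L^1$ of the weighted measure using a Lipschitz bound on $\ka^q=a^{2\rho/q}K$, which is exactly where hypothesis (2) and the condition $|\rho||1/p-1/q|<1$ enter (Propositions \ref{pro diff trans} and \ref{pro kaq}; the condition guarantees the integrability of $a^{2\rho(1/p-1/q)-2\alpha}$, not a contour-shift/holomorphy issue as you suggest). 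None of this is present in your outline.

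Second, your step (iv) cannot work as stated: you propose to obtain $L^p$ boundedness of the local (small-frequency) piece by Stein interpolation between an $L^q$ bound (from (2) plus Ionescu) and an $L^2$ bound (from $\|m_B\|_\infty<\infty$). Since $q\in[p,p']$ and (after the duality reduction) $p<2\le p'$, the exponent $p$ lies \emph{outside} the interval with endpoints $q$ and $2$ whenever $q>p$; interpolation between $L^q$ and $L^2$ can only produce exponents between $q$ and $2$ and therefore never reaches $p$. Hypothesis (2) genuinely lives on the smaller strip $T_q$ and by itself only yields information at the exponent $q$, which is why the paper uses it solely for a pointwise/Lipschitz estimate on a kernel (an $L^\infty$-type bound fed into an $L^1$ integral), not as an $L^q$ operator bound to be interpolated. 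In the paper the local part is in fact the easy part, disposed of by hypothesis (1) alone via Lemma \ref{lem:locm} and \cite[Proposition 3.2]{GMM}.
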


Since $|\rho|<2$ implies  $|\rho| |1/p-1/2|<1$ we immediately obtain a corollary in the case of small $|\rho|.$ Condition $|\rho|<2$ if satisfied in low dimensional hyperbolic spaces $\mathbb{H}^3$ and $\mathbb{H}^4.$ 
\begin{cor} \label{c: main} 
	
Let $|\rho|<2,$   $p\in(1,\infty)\setminus \{2\},$ and $N>(n+3)/2$. Assume that $m_B$ is a bounded Weyl-invariant holomorphic function in $T_p$ such that
	\begin{enumerate}
		\item $(\omega m_B)(\cdot  + i \rop)$ is an $\lp{\R}$ Fourier multiplier,
		\item  $m_B$ satisfies the Mikhlin-H\"ormander condition on $\R$
		$$
		\bigmod{\partial^jm_B(\zeta)}
		\leq C\, |\zeta|^{-j}
		\qquad  j=0,\ldots, N,\quad \zeta \in \R.
		$$
	\end{enumerate}
	then $B$ extends to a bounded operator on $\lp{\BX}$.
\end{cor}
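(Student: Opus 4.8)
The plan is to deduce Corollary \ref{c: main} from Theorem \ref{t: main} by a direct verification of hypothesis (2) of the theorem. Since we are given $|\rho|<2$ and $p\in(1,\infty)\setminus\{2\}$, the choice $q=2$ is admissible: indeed $q=2$ certainly lies in $[p,p']$, and $|\rho|\,|1/p-1/q| = |\rho|\,|1/p-1/2| < |\rho|/2 < 1$ by the assumption $|\rho|<2$. With $q=2$ we have $\de(q)=0$, hence $\roq=\de(q)|\rho|=0$, and consequently the strip $\TWq=T_2$ degenerates to the real line $\R$ (the condition $|\Im\zeta|<0$ is empty, so the relevant closed strip is just $\R$). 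Therefore the Mikhlin-H\"ormander condition required in hypothesis (2) of Theorem \ref{t: main}, namely
$$
\bigmod{\partial^jm_B(\zeta)}\leq C\,\Big(\min\big[\mod{\zeta-i\roq},\mod{\zeta+i\roq}\big]\Big)^{-j},\qquad j=0,\ldots,N,\ \zeta\in\TWq,
$$
becomes exactly
$$
\bigmod{\partial^jm_B(\zeta)}\leq C\,|\zeta|^{-j},\qquad j=0,\ldots,N,\ \zeta\in\R,
$$
because $\min[\mod{\zeta-0},\mod{\zeta+0}]=|\zeta|$. This is precisely hypothesis (2) of the corollary.

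Next I would observe that hypothesis (1) of the corollary is verbatim hypothesis (1) of Theorem \ref{t: main}: both require $(\omega m_B)(\cdot+i\rop)$ to be an $\lp{\R}$ Fourier multiplier. Likewise the standing assumption that $m_B$ is a bounded Weyl-invariant holomorphic function in $T_p$ is shared, and the quantitative requirement $N>(n+3)/2$ is identical. Hence all hypotheses of Theorem \ref{t: main} are met with this particular $q=2$, and the theorem yields that $B$ extends to a bounded operator on $\lp{\BX}$, which is the conclusion of the corollary.

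There is essentially no obstacle here: the corollary is a specialization, the only point requiring a line of argument being the numerical check $|\rho|\,|1/p-1/2|<1$, which is immediate from $|\rho|<2$ and $|1/p-1/2|<1/2$ for $p\in(1,\infty)$. One minor subtlety worth a remark is the degeneration of the strip $\TWq$ to $\R$ when $q=2$: one should note that the Mikhlin-H\"ormander estimates of hypothesis (2) of the theorem are then naturally interpreted as estimates on $\R$ for the (even, real-analytic) restriction of $m_B$, which is well-defined since $m_B$ is holomorphic on the larger strip $T_p\supset\R$. With that understood the deduction is purely a matter of substitution. Finally I would append the sentence already present in the excerpt noting that $|\rho|<2$ holds for the real hyperbolic spaces $\mathbb{H}^3$ and $\mathbb{H}^4$, so that the corollary is non-vacuous.
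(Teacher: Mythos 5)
Your proposal is correct and matches the paper's own (one-line) deduction: choose $q=2$ in Theorem \ref{t: main}, note that $|\rho|<2$ gives $|\rho|\,|1/p-1/2|<1$, and observe that $\roq=0$ collapses the strip $\TWq$ to $\R$ so that hypothesis (2) of the theorem becomes the classical Mikhlin--H\"ormander condition of the corollary. Nothing further is needed.
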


Our second main goal is to improve Ionescu's multiplier theorem \cite[Theorem 8]{I2}. Note that in Theorem \ref{t: main i} below we do not assume any specific condition on $m_B(\zeta)$  locally for $|\Re \zeta|<1.$
\begin{theorem} \label{t: main i} 
	
	Let $p$ is in $(1,\infty)\setminus \{2\}$ and $N>(n+3)/2$. Assume that $m_B$ is a bounded Weyl-invariant holomorphic function in $T_p$ such that
	\begin{enumerate}
		\item $m_B(\cdot + i \rop)$ is an $\lp{\R}$ Fourier multiplier,
		\item  For large values of $\zeta \in \TWp,$ $|\Re \zeta|>1,$ the multiplier $m_B$ satisfies
		$$
		\bigmod{\partial^jm_B(\zeta)}
		\leq C|\zeta|^{-j}.
		\qquad  j=0,\ldots, N.
		$$
	\end{enumerate}
	Then $B$ extends to a bounded operator on $\lp{\BX}$.
\end{theorem}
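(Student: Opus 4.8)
The plan is to deduce Theorem \ref{t: main i} from Theorem \ref{t: main} by taking $q=p$, so that assumption (2) of Theorem \ref{t: main} (the Mikhlin–H\"ormander condition on all of $\TWp$) is \emph{weakened} to: only for $|\Re\zeta|>1$. To make this work I must upgrade hypothesis (1) from ``$m_B(\cdot+i\rop)$ is an $\lp{\R}$ Fourier multiplier'' to ``$(\omega m_B)(\cdot+i\rop)$ is an $\lp{\R}$ Fourier multiplier'', since the latter is what Theorem \ref{t: main} asks for. The key point is that the two conditions differ only by the smooth weight $\omega(\la)=(\la^2+4\rho^2)^{(n-1)/4}$, which is comparable to a constant on the bounded set $\{|\Re\zeta|\le 2\}$ and behaves like $|\la|^{(n-1)/2}$ at infinity; away from a neighborhood of $|\Re\zeta|\le 1$ we have the decay estimate (2), which should be precisely strong enough to absorb this polynomial weight on the critical line $\Im\zeta=\rop$.

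First I would fix a smooth even cutoff $\chi$ on $\R$ with $\chi\equiv 1$ on $|\la|\le 1$ and $\chi\equiv 0$ on $|\la|\ge 2$, and split $m_B=\chi m_B+(1-\chi)m_B=:m_0+m_\infty$, extending the splitting holomorphically in the natural way (the cutoff depends only on $\Re\zeta$, or one uses a holomorphic Seeley-type extension as is standard in this circle of ideas). For the local piece $m_0$: it is supported where $|\Re\zeta|\le 2$, hence where $\omega$ is smooth and bounded below and above together with all derivatives; so $(\omega m_0)(\cdot+i\rop)$ is an $\lp{\R}$ Fourier multiplier iff $m_0(\cdot+i\rop)$ is, and the latter follows from hypothesis (1) together with the fact that multiplication by the compactly supported smooth function $\chi$ preserves $\lp{\R}$ Fourier multipliers (as $\widehat{\chi}\in L^1$). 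For the global piece $m_\infty$: it is supported where $|\Re\zeta|>1$, so the decay bounds (2) apply on the whole strip $\TWp$ on its support, and $(\omega m_\infty)$ then satisfies a Mikhlin–H\"ormander condition on $\TWp$ of the form $|\partial^j(\omega m_\infty)(\zeta)|\le C(\min[|\zeta-i\rop|,|\zeta+i\rop|])^{(n-1)/2-j}$; in particular, restricting to $\Im\zeta=\rop$ one checks directly that $(\omega m_\infty)(\cdot+i\rop)$ obeys a classical Mikhlin condition on $\R$ with a finite number of derivatives (here $N>(n+3)/2$ guarantees enough regularity after losing $(n-1)/2$ powers), hence is an $\lp{\R}$ Fourier multiplier by the one-dimensional Mikhlin theorem.

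Having verified both parts of (1) in the form required by Theorem \ref{t: main}, it remains to check condition (2) of Theorem \ref{t: main} with $q=p$. The constraint $|\rho||1/p-1/q|<1$ is trivially satisfied since it reads $0<1$. The Mikhlin–H\"ormander bound on $\TWq=\TWp$ is exactly hypothesis (2) of Theorem \ref{t: main i} for $|\Re\zeta|>1$; for $|\Re\zeta|\le 1$ the right-hand side $(\min[|\zeta-i\rop|,|\zeta+i\rop|])^{-j}$ is bounded below away from zero uniformly (the points $\pm i\rop$ have $|\Re|=0$ but their distance to $\{|\Re\zeta|\le 1,\ \Im\zeta\in[-\rop,\rop]\}$ is controlled), so there the inequality follows from the mere boundedness and holomorphy of $m_B$ on the slightly larger strip $T_p$ via Cauchy estimates on small disks — more precisely, holomorphy on $T_p$ plus $\|m_B\|_\infty<\infty$ yields $|\partial^j m_B(\zeta)|\le C_j$ on the compact sub-strip $\{|\Re\zeta|\le 1\}\cap \overline{T_{p-\epsilon}}$, and one either works on $\overline{T_{p-\epsilon}}$ and lets $\epsilon\to 0$ at the end, or notes the estimate is only needed in the interior. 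Applying Theorem \ref{t: main} now gives the $\lp{\BX}$ boundedness of $B$.

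The main obstacle is the bookkeeping at the ``seam'' $|\Re\zeta|\approx 1$: one must make the holomorphic cutoff precise (a smooth cutoff in $\Re\zeta$ is not holomorphic, so the standard device is to use an almost-holomorphic extension or, as in \cite{I2} and \cite{GMM}, to exploit that the relevant operators only see boundary values on $\Im\zeta=\rop$ and there a genuine smooth cutoff in the real variable $\la=\Re\zeta$ suffices, the holomorphy being used only to get the pointwise derivative bounds on the line from those in the strip). I expect that, as in the cited works, the cleanest route is to never actually split $m_B$ holomorphically but rather to split the boundary function $m_B(\cdot+i\rop)$ on $\R$, verify the two resulting multiplier conditions there, and separately record that $m_B$ itself (un-split) satisfies the interior derivative bounds needed for condition (2) of Theorem \ref{t: main} — this avoids all holomorphic-cutoff issues and is the version I would write up.
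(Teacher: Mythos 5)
Your reduction to Theorem \ref{t: main} fails at its central step: the claim that hypotheses (1) and (2) of Theorem \ref{t: main i} imply that $(\omega m_B)(\cdot+i\rop)$ is an $\lp{\R}$ Fourier multiplier. For the high\-/frequency piece you assert that $(\omega m_\infty)(\cdot+i\rop)$ ``obeys a classical Mikhlin condition'', but the bound you derive is $|\partial^j(\omega m_\infty)(\la+i\rop)|\lesssim |\la|^{(n-1)/2-j}$, which for $j=0$ says the function \emph{grows} like $|\la|^{(n-1)/2}$. An unbounded function is never an $L^p$ Fourier multiplier (recall $\Mp{\R}\subset M_2(\R)=L^\infty$), and hypothesis (2) only gives $|m_B(\zeta)|\leq C$, not decay of order $|\zeta|^{-(n-1)/2}$. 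The constant multiplier $m_B\equiv 1$ satisfies both hypotheses of Theorem \ref{t: main i} yet $(\omega m_B)(\cdot+i\rop)\sim|\la|^{(n-1)/2}$ is unbounded, so the implication you need is simply false; Theorem \ref{t: main i} is not a special case of Theorem \ref{t: main} by this route. A second, independent problem is your verification of condition (2) of Theorem \ref{t: main} for $|\Re\zeta|\leq 1$: at a point such as $\zeta=\tfrac12+i(\rop-\varepsilon)$ the required bound is $|\partial^j m_B(\zeta)|\leq C\,2^{j}$, uniformly as $\varepsilon\to0$, whereas Cauchy's estimate from boundedness and holomorphy on $T_p$ only yields $|\partial^j m_B(\zeta)|\lesssim\varepsilon^{-j}$, which blows up; passing to $\overline{T_{p-\epsilon}}$ does not repair this, since Theorem \ref{t: main} for a smaller strip corresponds to a different Lebesgue exponent.

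The paper avoids both issues by \emph{not} reducing to Theorem \ref{t: main}. It reuses the local/global splitting and the kernels $K_1,K_2,\tK_1,\tK_2$ (so Propositions \ref{pro approx trans} and \ref{pro diff trans} with $q=p$ still apply), but verifies $\|\ka^p\|_{\Cvp{\BA}}<\infty$ and $\|\ka^p\|_{\Lip}<\infty$ via a cutoff $\bC(\la)$ in the \emph{frequency} variable: on the low\-/frequency piece $\bC(\la)\check\bc^{-1}$ is a compactly supported smooth function, hence a harmless multiplier factor, so hypothesis (1) suffices there with no weight $\omega$ at all; on the high\-/frequency piece one integrates by parts $N$ times using hypothesis (2) together with \eqref{f: HCest} and \eqref{f: estIonescu} to show the kernel is in $\lu{\BA}$ (a stronger conclusion than membership in $\Cvp{\BA}$), thereby bypassing multiplier theory exactly where the weight $\omega$ would otherwise be needed. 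The local part is handled by \cite{ST} via the Mikhlin--H\"ormander condition on $\R$ obtained from hypothesis (2) and Cauchy estimates on $|\la|\leq 1$ (which is legitimate there because $\la$ is real, at distance $\rop$ from $\partial T_p$). If you want to salvage your write\-/up, this frequency\-/side splitting is the missing idea.
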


Our paper may be thought of as an attempt to obtain a counterpart of \cite{CMW} on noncompact symmetric space. There, in collaboration with Celotto and Meda, we considered a similar problem on homogeneous trees.  We obtained a characterization of the class of $L^p$ spherical multipliers on homogeneous trees in terms of $L^p$ multipliers on the torus. Analysis on homogeneous trees often serves as a model for global analysis on rank one noncompact symmetric spaces. Yet it is unclear if a characterization analogous to \cite{CMW} is feasible for spherical multipliers on $\lp{\BX}$. A significant step towards it would be to erase item (2) in Theorem \ref{t: main}, Corollary \ref{c: main} or Theorem \ref{t: main i}. We hope to return to this topic in the future.

Our paper is organized as follows.  Section~\ref{s: Background} contains notation
concerning  rank one symmetric spaces and
the spherical Fourier analysis on these spaces. In Lemma \ref{lem:CtoI} we also give an important inequality for passing between Cartan and Iwasawa coordinates. In Section \ref{s:elg} we split the corresponding convolution operators onto local and global parts and estimate these parts separately. The local parts in both Theorems \ref{t: main} and \ref{t: main i} are estimated by a straightforward  reference to the results from \cite{GMM} and \cite{ST}, respectively. The analysis of global parts of the kernels is more complicated. Here further splittings are needed, together with a transference result from \cite{CMW} (which generalizes
previous results of Ionescu \cite{I2}). We remark that in order to reach the desired results the splittings need to be done in a more refined way than in \cite{I2} or \cite{MW}. This refinement is needed to avoid some $L^1$ estimates. In particular, in the proof of Theorem \ref{t: main} we cannot use the Abel transform argument as in \cite[Lemma 5]{I2} or \cite[Lemma 2.7 iii]{MW}. Finally, in Section \ref{s pf mult} we prove our main results - Theorems \ref{t: main} and \ref{t: main i}. This is achieved by using the results from previous sections together with Harish-Chandra expansions of spherical functions. Since the computations in Section \ref{s pf mult} are  standard we do not provide all the details there.

A duality argument shows that when proving Theorems \ref{t: main} and \ref{t: main i} it is enough to focus on $p\in (1,2).$ Therefore, in the reminder of the paper we always consider $p\in (1,2),$ even if this is not stated explicitly.

We will use the ``variable constant convention'', and denote by $C,$
possibly with sub- or superscripts, a constant that may vary from place to
place and may depend on any factor quantified (implicitly or explicitly)
before its occurrence, but not on factors quantified afterwards. For two non-negative real numbers $X$ and $Y$ by $X\lesssim Y$ we mean that $X\le C Y,$ where $C>0$ is a constant independent of significant quantities. 

\section{Background and preliminary results} \label{s: Background}

\subsection{Preliminaries on symmetric spaces of rank one}
We use notation from \cite[Section 2]{MW}. We recall it here for the sake of completeness. The books of S.~Helgason \cite{H1,H2} and Gangolli and Varadarajan \cite{GV}
are basic references to the subject.

Suppose that $\BG$ is a noncompact semisimple Lie group with finite center.
Denote by $\BK$ a maximal compact subgroup of $\BG$
and consider the associated Riemannian symmetric space of the noncompact type $\BX:=\BG/\BK$.
We briefly summarize the main features of spherical harmonic analysis on $\BX$.

Denote by $\theta$ a Cartan involution of the Lie algebra $\frg$ of $\BG$,
and write $\frg = \frk \oplus \frp$ for the corresponding
Cartan decomposition. Let $\fra$ be a maximal
abelian subspace of $\frp$, and denote by $\fra^*$ its dual space,
and by $\fra_\BC^*$ the complexification of $\fra^*$. Since $\BX$ has rank one the algebra $\fra$ is one-dimensional. We denote by $\BA$ the multiplicative group $\exp\fra$ and let $\id$ be its identity element. Clearly, $\BA$ is then
isomorphic to the additive group of the vector space $\fra$.
It is convenient to choose a particular isomorphism between $\BA$ and $\fra$,
which we now describe.  Denote by $\al$ the unique simple positive root of the pair $(\frg,\fra)$.  
Denote by $H_0$ the unique vector in $\fra$ such that
$\al(H_0) = 1$, and normalize the Killing form of $\frg$ so that $\bigmod{H_0}^2 = 1$.
Every vector in $\fra$ is of the form $tH_0$, with $t$ in~$\BR$, and every element
of $\BA$ is then of the form $\exp(tH_0)$. The Weyl chamber is $\fra^+ := \{tH_0: t>0\}$, and we
set $\BA^+ := \exp (\fra^+)$ and $\BA^- := \exp (-\fra^+)$.  We shall often write $a$ for $\exp(tH_0)$ and $\log a$ for $tH_0$.

The root system is either of the form $\{-\al,\al\}$ or of the form
$\{-2\al,-\al, \al,2 \al\}$.  We denote by~$m_\al$ and $m_{2\al}$ the
multiplicities of $\al$ and $2 \al$, respectively.  Observe that $m_\al+m_{2\al}+1=n$
and $m_{2\al} = 0$ in the case where $2\al$ is not a root.  
Define $\rho$ by  $2\rho = (m_\al+2m_{2\al}) \, \al$, and $2\mod{\rho} = m_\al+2m_{2\al}$.  
We consider the Lie algebras $\frn := \frg_{\al} + \frg_{2\al}$ and $\OV\frn := \frg_{-\al} + \frg_{-2\al}$,
where $\frg_\be$ denotes the root space associated to the root $\be$, and the corresponding connected and 
simply connected nilpotent Lie groups $\BN$ and $\OBN$, respectively.  
If $a$ is in $\BA$ and $\la$ belongs to $\fra^*$, we 
write $a^\la$ instead of $\e^{\la(\log a)}$. In particular, if $a=\exp(tH_0)$ then $a^{\alpha}=e^t.$ 
Define
$$
\de(a)
:= 2^{-2\mod{\rho}} \, \big[a^\al-a^{-\al}\big]^{m_\al}  \,  \big[a^{2\al}-a^{-2\al}\big]^{m_{2\al}}
\qquad  a \in \BA^+
$$
and note that  $\delta(a)$ is of order $(\log a)^{n-1}$ when $a$ is close to $e_{\BA}$ and of order $a^{2\rho}$ when $a$ is large.

The group $\BG$ admits the Cartan decomposition $\BG= \BK\BA^+\BK$ and the Iwasawa decomposition $\BG= \OBN\BA\BK$.  The integration formula in the Iwasawa coordinates is
\begin{equation*} 
	\int_\BG f(x) \wrt x
	=  c_\BG\, \int_{\OBN} \int_{\BA} \int_{\BK} f(v ak) \, a^{2\rho} \, \wrt v \wrt a\wrt k.
\end{equation*}
We remark that here and later on we always denote by $da$ the multiplicative Haar measure on $\BA.$ This means for instance that
\[
\int_{\R}f(\exp(t H_0))\,dt=\int_{\BA}f(a)\,da.
\]
 
We abbreviate $$\BS:=\OBN \BA.$$ If $k$ is a $\BK$--bi-invariant kernel on $\BG$ then the operator $f\mapsto f*k$ is bounded on $L^p(\BG)$ if and only if $f\mapsto f*_{\BS}K$ is bounded on $L^p(\BS).$ Here $K\colon \BS\to \C$ is given by $K(va):=k(va).$

For each $g$ in $\BG$ we denote by $[g]_+$ and $\exp\big[H(g)\big]$ the middle components of $g$ in the 
Cartan and the Iwasawa decompositions, respectively. 
Recall that in the rank one case $H(v)$ is in $\OV{\fra^+}$ for every $v$ in $\OBN,$ see \cite[Corollary~6.6]{H1}. We set 
\begin{equation}
	\label{e Pv}
	P(v) := \e^{-\rho H(v)},\qquad v\in \OBN.\end{equation}
It is well known that for any $q>1$
\begin{equation}
\label{e Pv'}
(1+ |\alpha (H(v))|)P(v)^q\quad\textrm{belongs to }\lu{\OBN}.
\end{equation}  
This follows 
by an explicit computation starting from \cite[Theorem~6.1~\rmii]{H3}.

An important ingredient in \cite{I2} was the following formula valid for every $a\in \BA^+$ and every $v\in\OBN$
\begin{equation*}
	[va]_+
	= a  \,\exp\big[H(v)\big] \, \exp\big[E(v,a)H_0\big]  
	\qquad\hbox{and}\quad 0\leq E(v,a)\leq 2a^{-2\alpha},
\end{equation*}
see \cite[Lemma 3]{I2}. Abbreviating 
$$b=[va]_+,\qquad \ta=a\exp(H(v))$$ 
the above identity can be restated
\begin{equation}
 \label{f: decom Iwas}
 \ta^{\alpha}\le b^{\alpha} \le \ta^{\alpha}\exp(2a^{-2\alpha}),\qquad a\in \BA_+,\quad v\in \OBN.
\end{equation}
We shall need a related result, which allows general $a\in \BA$. We let $$\Phi(x)=\frac{x+\sqrt{x^2-4}}{2},\qquad x\ge 2.$$
	Then $\Phi$ is the inverse function of $x\mapsto x+1/x$ considered on the domain $[1,\infty).$ It has the following easily verifiable properties
	\begin{equation}
		\label{e Phi}
\Phi(2)=1,\qquad\Phi \textrm{ is increasing,}\qquad 0\le x-\Phi(x)\le \frac{2}{x}.
\end{equation}

\begin{lem}
	\label{lem:CtoI}
	Let $a\in \BA,$ $v\in \OBN$ and denote $b=[va]_+$ and $\ta=a\exp(H(v)).$  Then  we have
	\begin{enumerate}
		\item $b^{\alpha}\geq \Phi\left[\max (a^{-\alpha},(\ta)^{\alpha},2)\right]$
		\item $b^{\alpha}\leq  \Phi\left[a^{-\alpha}+(\ta)^{\alpha}\right]$ 
	\end{enumerate}
Moreover, if $(\ta)^{\alpha}\ge 2$ then it holds 
\begin{enumerate}
	\setcounter{enumi}{2}
		\item$(\ta)^{\alpha} (1-2(\ta)^{-2\alpha})\le b^{\alpha}\le (\ta)^{\alpha} (1+a^{-\alpha}\ta^{-\alpha}),$ \end{enumerate}
	while for $a^{-\alpha}\ge 2$ we have 
	\begin{enumerate}
		\setcounter{enumi}{3}
		\item $a^{-\alpha}(1-2a^{2\alpha})\le b^{\alpha}\le a^{-\alpha} (1+a^{\alpha}(\ta)^{\alpha}).$
	\end{enumerate}
	
\end{lem}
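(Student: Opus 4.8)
The key identity to exploit is the one recalled just above from \cite[Lemma 3]{I2}, valid for $a\in\BA^+$:
$$
[va]_+ = a\,\exp[H(v)]\,\exp[E(v,a)H_0],\qquad 0\le E(v,a)\le 2a^{-2\alpha},
$$
equivalently $(\ta)^{\alpha}\le b^{\alpha}\le (\ta)^{\alpha}\exp(2a^{-2\alpha})$. The plan is to reduce the case of general $a\in\BA$ to this known case by a symmetry observation: the roles of the ``$a$-part'' and the ``$v$-part'' can be interchanged. First I would note that $[va]_+$ depends on $v\in\OBN$ and $a\in\BA$ only through the combination that gives the Cartan middle component, and that since $[g]_+=[g^{-1}]_+$ for the Cartan projection (the Cartan decomposition is symmetric under $g\mapsto g^{-1}$, up to replacing $\BA^+$ by $\BA^-$ and then using Weyl invariance), one has $b^{\alpha}=[va]_+^{\alpha}$ is a symmetric-type function of $a^{\alpha}$ and $(\ta)^{\alpha}$ — more precisely, writing $s=a^{\alpha}$, the Iwasawa relation gives a formula for $b^{\alpha}+b^{-\alpha}$ in terms of $s+s^{-1}$ and $(\ta)^{\alpha}+(\ta)^{-\alpha}$, or better still in terms of $a^{-\alpha}$ and $(\ta)^{\alpha}$. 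Concretely, I expect the clean statement underlying everything to be of the form
$$
b^{\alpha}+b^{-\alpha} \;\ge\; a^{-\alpha}+(\ta)^{\alpha} \quad\text{(lower bound)}\qquad\text{and}\qquad b^{\alpha}+b^{-\alpha}\;\le\; a^{-\alpha}+(\ta)^{\alpha}+ (\text{error})
$$
coming from the explicit $KAK$/Iwasawa computation in rank one (essentially the cosh addition formula for $\mathbb H^n$, extended to general rank one by the root-multiplicity computation). Then, since $\Phi$ is the inverse of $x\mapsto x+1/x$ on $[1,\infty)$ and is increasing by \eqref{e Phi}, items (1) and (2) follow by applying $\Phi$ to both sides: $b^{\alpha}\ge\Phi[b^{\alpha}+b^{-\alpha}]\ge\Phi[\max(a^{-\alpha},(\ta)^{\alpha},2)]$ using monotonicity and $\Phi(2)=1$, and $b^{\alpha}\le\Phi[a^{-\alpha}+(\ta)^{\alpha}]$ directly (after checking the error term is absorbed, or that the inequality $b^\alpha+b^{-\alpha}\le a^{-\alpha}+(\ta)^\alpha$ holds exactly — which is what I would try to prove, since it is the natural ``triangle inequality'' for the Cartan projection).

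For items (3) and (4): these are the quantitative refinements in the regimes $(\ta)^{\alpha}\ge 2$ and $a^{-\alpha}\ge 2$ respectively. In the first regime, $(\ta)^{\alpha}$ dominates, so I would feed $x=a^{-\alpha}+(\ta)^{\alpha}$ (which is $\ge 2$) into the property $0\le x-\Phi(x)\le 2/x$ from \eqref{e Phi} to get the upper bound $b^{\alpha}\le\Phi[a^{-\alpha}+(\ta)^\alpha]\le a^{-\alpha}+(\ta)^{\alpha}\le(\ta)^\alpha(1+a^{-\alpha}(\ta)^{-\alpha})$; and for the lower bound use $b^{\alpha}\ge\Phi[\max((\ta)^{\alpha},2)]=\Phi[(\ta)^{\alpha}]\ge(\ta)^{\alpha}-2/(\ta)^{\alpha}=(\ta)^{\alpha}(1-2(\ta)^{-2\alpha})$, again straight from \eqref{e Phi}. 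Item (4) is symmetric: swap the roles of $a^{-\alpha}$ and $(\ta)^{\alpha}$, using $\Phi[\max(a^{-\alpha},2)]=\Phi[a^{-\alpha}]\ge a^{-\alpha}-2/a^{-\alpha}=a^{-\alpha}(1-2a^{2\alpha})$ for the lower bound and $b^\alpha\le\Phi[a^{-\alpha}+(\ta)^\alpha]\le a^{-\alpha}+(\ta)^\alpha = a^{-\alpha}(1+a^\alpha(\ta)^\alpha)$ for the upper bound. So (3) and (4) are essentially immediate consequences of (1), (2) and \eqref{e Phi}, requiring no new ideas.

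**Main obstacle.** The crux is establishing the two-sided estimate for $b^{\alpha}+b^{-\alpha}$ (or directly items (1)–(2)) for \emph{general} $a\in\BA$, i.e.\ including $a\in\BA^-$, where \cite[Lemma 3]{I2} does not directly apply. The honest way is to go back to the explicit Iwasawa/Cartan formulas in rank one. Writing $a=\exp(tH_0)$, one has for $v\in\OBN$ the classical identity (see \cite[Ch.~IX]{H1} or the computations in \cite{H3}) expressing $\cosh$-type combinations of $[va]_+$ in terms of $e^{\pm t}$ and the quantity $P(v)=e^{-\rho H(v)}$; in the $t\ge 0$ case this recovers \eqref{f: decom Iwas}, and for $t<0$ one gets the same formula with $a$ and $\exp(H(v))$ trading places up to controlled errors, which is exactly why $a^{-\alpha}$ (rather than $a^{\alpha}$) and $(\ta)^\alpha$ appear symmetrically. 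I would verify this by direct computation in $SU(n,1)$ / $SL(2,\R)$ as the model and then invoke the uniformity of the root-space structure; alternatively one can phrase it group-theoretically via $[va]_+ = [(va)^{-1}]_+^{-1} = [a^{-1}v^{-1}]_+^{-1}$ and push $a^{-1}$ through $v^{-1}$ using the Iwasawa decomposition of $\BG$, reducing to the $\BA^+$ case with $a^{-1}$ in place of $a$. This algebraic reduction is where the real content lies; once the symmetric two-sided bound on $b^{\alpha}+b^{-\alpha}$ is in hand, everything else is a mechanical application of the elementary properties of $\Phi$ recorded in \eqref{e Phi}.
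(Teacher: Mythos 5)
Your proposal follows essentially the same route as the paper: the explicit rank-one formula from Helgason \cite[Ch.~2, Thm.~6.1]{H3} expressing $\bigl[\tfrac{b^{\alpha}+b^{-\alpha}}{2}\bigr]^2$ in Iwasawa coordinates yields $b^{\alpha}+b^{-\alpha}\ge\max(a^{-\alpha},(\ta)^{\alpha},2)$ and $b^{\alpha}+b^{-\alpha}\le a^{-\alpha}+(\ta)^{\alpha}$, after which items (1)--(4) follow from the properties \eqref{e Phi} of $\Phi$ exactly as you describe. One correction: your ``expected'' lower bound $b^{\alpha}+b^{-\alpha}\ge a^{-\alpha}+(\ta)^{\alpha}$ has the wrong direction when $2\alpha$ is a root --- the sum is in fact an \emph{upper} bound, with equality precisely when the $\frg_{-2\alpha}$-component of $v$ vanishes (cf.\ Remark \ref{rem:roredu}); fortunately your actual derivation of item (1) only invokes the correct $\max$ version, which does follow from the explicit identity since each of the terms $a^{-2\alpha}/4$ and $(\ta)^{2\alpha}/4$ appears separately and nonnegatively on its right-hand side.
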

\begin{remark}
	\label{rem:roredu}
	When $2\alpha$ is not a root, then 2) becomes an equality. Whether this helps in the analysis in this particular case is not clear to us at this stage. 
\end{remark}
\begin{remark}
	\label{rem:informal} Items 3) and 4) of the lemma say the following. Either $b^{\alpha}\approx (\ta)^{\alpha}$ or $b^{\alpha} \approx a^{-\alpha}$ according to whether $(\ta)^{\alpha}$ or $a^{-\alpha} $ is large. In applications the comparison between $(\ta)^{\alpha}$ and $a^{-\alpha}$  is going to determine which of the items 3) or 4) will be useful.
\end{remark}
\begin{remark}
	\label{rem:ourvsIon}
For $a\in \BA^+$ item 3) is close to \eqref{f: decom Iwas}. However,  later on we will need to consider also $a\in \BA^{-}$ when estimating global parts of the considered kernels.  Note that for certain $a$ and $v$ we may have $\ta^{\alpha}\ge 2$ even though $a\in \BA^{-}.$   
\end{remark}

\begin{proof}[Proof of Lemma \ref{lem:CtoI}]
	We use explicit computations in \cite[Chapter 2, Theorem 6.1 (i)]{H3}. In our notation we see that
	\begin{equation*}
		\left[\frac{b^{\alpha}+b^{-\alpha}}{2}\right]^2=	\left[\frac{a^{-\alpha}}{2}+\frac{a^{\alpha}}{2}(1+c|X|^2)\right]^2+ca^{2\alpha}|Y|^2.
	\end{equation*}
Here $X$ and $Y$ are coordinates of $v\in \OBN $ corresponding to root spaces $\frg_{-\alpha}$ and $\frg_{-2\alpha},$ respectively, while $c^{-1}=4(m_{\alpha}+4m_{2\alpha}).$ Further, in view of \cite[Chapter 2, Theorem 6.1 (ii)]{H3}  we have
\begin{equation}
	\label{eq:baHv-1}
\exp(2\alpha H(v))=(1+c|X|^2)^2+4c|Y|^2
\end{equation}
and obtain
\begin{equation}
	\label{eq:baHv}
		\left[\frac{b^{\alpha}+b^{-\alpha}}{2}\right]^2=\frac{a^{-2\alpha}}{4}+\frac{a^{2\alpha}}{4}\exp(2\alpha H(v))+\frac{1}{2}(1+c|X|^2).
\end{equation}
Consequently, 
\begin{equation*}
	b^{\alpha}+b^{-\alpha}\ge a^{-\alpha} \qquad \textrm{and}\qquad b^{\alpha}+b^{-\alpha}\ge (\ta)^{\alpha},
\end{equation*}
and since $b^{\alpha}+b^{-\alpha}\ge 2$ we obtain 
 item 1).

Now we justify 2). Rearranging \eqref{eq:baHv}and using \eqref{eq:baHv-1} we see that 
\begin{align*}
\left[\frac{b^{\alpha}+b^{-\alpha}}{2}\right]^2&=\left[\frac{a^{-\alpha}}{2}+\frac{(\ta)^{\alpha}}{2}\right]^2+\frac{1}{2}\left[1+c|X|^2-\exp(\alpha H(v))\right]\\
&\le \left[\frac{a^{-\alpha}}{2}+\frac{(\ta)^{\alpha}}{2}\right]^2.
\end{align*}
	which implies 2).

To prove 3) we apply \eqref{e Phi} and items 1) and 2) obtaining
\begin{align*}
1-2(\ta)^{-2\alpha}\le \frac{\Phi[ (\ta^{\alpha})]}{\ta^{\alpha}} \le\frac{b^{\alpha}}{\ta^{\alpha}}\le \frac{\Phi(a^{-\alpha}+(\ta^{\alpha}))}{\ta^{\alpha}}\le 1+a^{-\alpha}(\ta)^{-\alpha}.
\end{align*}
 The proof of 4) proceeds similarly once we interchange $(\ta)^{\alpha}$ with $a^{-\alpha}$. This completes the proof of Lemma \ref{lem:CtoI}.
\end{proof}

\subsection{Estimates for $L^p$ norms of convolution operators}

The estimates presented below will be needed in the analysis of global parts of the considered operators in Section \ref{sec:glob}. 

Let $\Ga$ be a locally compact group with left Haar measure $\wrt y$. Throughout the paper $\Cvp{\Ga}$ will denote the space of bounded operators on $\lp{\Ga}$
that commute with left translations (equivalently the space of right convolutors), 
equipped with the operator norm on $\lp{\Ga}$. If $\phi$ is a  convolution (distributional) kernel of an operator $K_{\phi}f=f*_{\Ga} \phi$ in $\Cvp{\Ga}$ we write $\|\phi\|_{\Cvp{\Ga}}:=\|K_{\phi}\|_{\Cvp{\Ga}}.$ 

In this paper we deal with two types of groups $\Ga.$ Firstly, we consider $\Ga=\BS=\OBN \BA$, which is the semidirect product of the groups $\OBN$ and $\BA.$ In this case $a^{2\rho}dadv$ is a left Haar measure on $\Ga$ while $da dv$ is a right Haar measure on $\Ga.$ Secondly, in the case $\Ga=\BA$ the group is abelian, hence, unimodular with Haar measure $da$. Recall that we consider a multiplicative Haar measure $da$ on $\BA$. We let $D=a\partial_a$ be the multiplicative derivative, given, for $a=\exp(tH_0)$ by
$(D\phi) (a)=\partial_t (\phi(\exp(tH_0))).$

We denote by $\cM$ be the Mellin/Fourier transform on $\BA$ given, for $\phi\in L^1(\BA),$ by 
\begin{equation*}
\cM(\phi)(\la)=\int_{\BA}\phi(a)\,a^{-i\la}\,da,\qquad \la\in \fra^*.
\end{equation*}
The inverse Fourier transform is then defined, for $m\colon \fra^*\to \C,$ by 
\begin{equation}
	\label{e ft}
	\mF (m) (a)=\int_{a^*}m(\la)\,a^{i\la}\,d\la,\qquad a\in \BA,
\end{equation} 
and the inversion formula reads
\begin{equation}
	\label{e MFinv}
\phi(a)=C\int_{\fra^*}\cM(\phi)(\la)\,a^{i\la}\,d\la,
\end{equation}
where $C$ is a constant and $d\la$ denotes a Haar (Lebesgue) measure on the additive group $\fra^*.$ 

A function $m\colon \fra^*\to \C$ is called an $\lp{\BA}$ Fourier multiplier whenever the corresponding convolution operator with the kernel \eqref{e ft} is bounded on $\lp{\BA}.$      The space of $\lp{\BA}$ Fourier multipliers is denoted by $\Mp{\BA}$ and comes equipped with the operator norm on $L^p(\BA)$.  This norm coincides with $\|\phi\|_{\Cvp{\BA}}$ where  $m=\cM(\phi).$

Our next lemma says that multiplication by an appropriately smooth function $\phi$ does not change the norm of a convolution operator on $L^p(\BA).$ Similar properties hold for more general functions $\phi$ and on more general abelian groups, see \cite[Theorem 2.8]{MW} and \cite{Co}. Lemma \ref{lem Mel} is convenient because of the explicit control in terms of the $L^1(\BA)$ norms of $\phi$ and $D^2\phi$ that will be needed later.  
\begin{lem}
	\label{lem Mel}
	Let $\phi$ be a smooth function on $\BA$ such that $\phi \in L^1(\BA)$ and $D^2\phi\in L^1(\BA).$ Take $p\in [1,\infty).$ Then for any kernel $\kappa$ on $\BA$ we have
	\begin{equation*}
		\|\phi \kappa\|_{\Cvp{\BA}}\lesssim (\|\phi\|_{L^1(\BA)}+\|D^2 \phi\|_{L^1(\BA)})\cdot \|\kappa\|_{\Cvp{\BA}}.
	\end{equation*}  
\end{lem}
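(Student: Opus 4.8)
The plan is to pass to the Mellin transform side and realize multiplication by $\phi$ on the kernel as a convolution (in the multiplier variable $\la$) on the multiplier side. Concretely, if $\kappa$ has Mellin transform $m = \cM(\kappa) \in \Mp{\BA}$ with $\|m\|_{\Mp{\BA}} = \|\kappa\|_{\Cvp{\BA}}$, then $\phi\kappa$ has Mellin transform $\cM(\phi\kappa) = C\,\cM(\phi) * m$, the convolution taking place on the additive group $\fra^*$. So the goal reduces to showing
\begin{equation*}
\|\cM(\phi) * m\|_{\Mp{\BA}} \lesssim \bigl(\|\phi\|_{L^1(\BA)} + \|D^2\phi\|_{L^1(\BA)}\bigr)\,\|m\|_{\Mp{\BA}}.
\end{equation*}
The first step is therefore to justify that $\cM(\phi)$, as a function on $\fra^* \cong \R$, lies in $L^1(\fra^*)$ with norm controlled by the right-hand side: since $D^2$ acts as multiplication by $-\la^2$ (up to constants) under $\cM$, we get $(1+\la^2)\,|\cM(\phi)(\la)| \lesssim \|\phi\|_{L^1(\BA)} + \|D^2\phi\|_{L^1(\BA)}$ pointwise, and integrating $(1+\la^2)^{-1}$ over $\fra^*$ gives $\|\cM(\phi)\|_{L^1(\fra^*)} \lesssim \|\phi\|_{L^1(\BA)} + \|D^2\phi\|_{L^1(\BA)}$.

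The second step is the standard fact that convolving a fixed $L^p$ multiplier $m$ against an $L^1$ measure $\nu$ on the dual group gives an $L^p$ multiplier with $\|\nu * m\|_{\Mp{\BA}} \le \|\nu\|_{L^1(\fra^*)}\,\|m\|_{\Mp{\BA}}$. I would prove this by writing $\nu * m$ as a ``continuous average'' of translates of $m$: translation of the multiplier $m(\cdot)$ by $\la_0 \in \fra^*$ corresponds on the kernel side to modulation $\kappa(a) \mapsto a^{i\la_0}\kappa(a)$, which is an isometry of $\Cvp{\BA}$ (multiplication by a unimodular character commutes with convolution up to a character twist and preserves the $L^p$ operator norm). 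Then
\begin{equation*}
K_{\nu * m} = C\int_{\fra^*} \cM(\phi)(\la_0)\, M_{\la_0} K_m M_{-\la_0}\,d\la_0,
\end{equation*}
where $M_{\la_0}$ is multiplication by $a^{i\la_0}$ on $L^p(\BA)$, and the operator-norm triangle inequality (Minkowski's integral inequality for the Bochner integral) yields the bound $\|K_{\nu*m}\|_{\Cvp{\BA}} \le C\,\|\cM(\phi)\|_{L^1(\fra^*)}\,\|K_m\|_{\Cvp{\BA}}$. Combining with Step 1 finishes the proof.

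\textbf{Main obstacle.} The routine computations (the Mellin-transform intertwining $D^2 \leftrightarrow -\la^2$, the inversion formula \eqref{e MFinv}, Fubini to identify $\cM(\phi\kappa)$ with a convolution) are all straightforward given the setup in the excerpt. The one point requiring a little care is making the vector-valued integral $\int_{\fra^*} \cM(\phi)(\la_0)\, M_{\la_0} K_m M_{-\la_0}\,d\la_0$ rigorous — i.e. checking it converges in the strong (or weak) operator topology and that its ``symbol'' is indeed $\nu * m$ — and ensuring the smoothness/integrability hypotheses on $\phi$ are exactly what is needed to run this (they are: $\phi, D^2\phi \in L^1(\BA)$ is precisely what makes $\cM(\phi) \in L^1(\fra^*)$). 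One should also note the density issue: it suffices to establish the inequality for $\kappa$ in a dense class (say, with $m = \cM(\kappa)$ smooth and compactly supported) and then pass to general convolutors by the usual approximation, since the right-hand side is a genuine operator norm. I do not expect any of this to be a serious difficulty; the lemma is essentially a quantitative repackaging of the transference principle already cited from \cite[Theorem 2.8]{MW} and \cite{Co}, with the explicit $L^1$ dependence made visible.
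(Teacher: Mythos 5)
Your proposal is correct and is essentially the paper's own argument: the paper likewise writes $\phi\kappa = C\int_{\fra^*}\cM(\phi)(\la)\,a^{i\la}\kappa(a)\,d\la$ via the inversion formula, uses that modulation $\kappa\mapsto a^{i\la}\kappa$ preserves $\|\cdot\|_{\Cvp{\BA}}$ together with Minkowski's integral inequality, and bounds $\|\cM(\phi)\|_{L^1(\fra^*)}$ by $\|\phi\|_{L^1(\BA)}+\|D^2\phi\|_{L^1(\BA)}$ by integrating by parts twice. Your phrasing on the multiplier side (translates of $m$ versus modulations of $\kappa$) is the same computation viewed through the Mellin transform.
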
 
\begin{proof}
Using the inversion formula \eqref{e MFinv} we see that
\[
\phi(a)\kappa(a)=C\int_{\fra^*}\cM(\phi)(\la)[a^{i\la}\kappa(a)]\,d\la,
\]
Since $\|a^{i\la}\kappa(a)\|_{\Cvp{\BA}}=\|\kappa(a)\|_{\Cvp{\BA}},$ Minkowski's integral inequality implies that
	\[
	\|\phi \kappa\|_{\Cvp{\BA}}\leq \|\cM(\phi)\|_{L^1(\fra^*)} \|\kappa\|_{\Cvp{\BA}}.
	\]
	Now, integrating by parts twice we obtain $\|\cM(\phi)\|_{L^1(\fra^*)}\lesssim \|\phi\|_{L^1(\BA)}+\|D^2\phi\|_{L^1(\BA)}$ thus completing the proof.
\end{proof}

The $L^p(\BX)$ boundedness of the approximating global kernels in Section \ref{sec:glob} will be obtained from a transference result originating in \cite{CMW} and employed also in \cite{MW}.  We state a consequence  of  \cite[Theorem 3.3]{MW} that will be used in our paper.
\begin{pro} \label{t: Transference principle}
	Assume that $K$ is a kernel on $\BS$ such that $a^{2\rho/p}K\in \lu{\OBN;\Cvp{\BA}}.$ Then we have
	\begin{equation}
		\label{e tp1}
		\bignorm{K}{\Cvp{\BS}}
		\leq \bignorm{a^{2\rho/p}K}{\lu{\OBN;\Cvp{\BA}}},
	\end{equation}
	and, consequently,
	\begin{equation}
		\label{e tp2}
		\bignorm{K}{\Cvp{\BS}}
		\leq \int_{\OBN}\int_{\BA} a^{2\rho/p}|K(va)|\,da\,dv
	\end{equation}
\end{pro}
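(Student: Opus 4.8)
The statement is a transference-type inequality: it bounds the $\Cvp{\BS}$ norm of a convolution operator on the solvable group $\BS=\OBN\BA$ by an averaged $\Cvp{\BA}$ norm along the $\OBN$-fibers, with the weight $a^{2\rho/p}$ accounting for the modular function of $\BS$. The plan is to reduce \eqref{e tp1} directly to \cite[Theorem 3.3]{MW}, and then to deduce \eqref{e tp2} from \eqref{e tp1} by a trivial pointwise estimate.

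First I would set up the reduction. Write an element of $\BS$ as $va$ with $v\in\OBN$, $a\in\BA$, and recall that $da\,dv$ is a right Haar measure while $a^{2\rho}\,da\,dv$ is a left Haar measure. For a kernel $K$ on $\BS$ the operator $f\mapsto f*_{\BS}K$ acts, after the change of variables matching the group law of the semidirect product, and the key point exploited in \cite{MW} is that the $\BA$-action on $\OBN$ is by dilations, so that the convolution on $\BS$ decomposes as an $\OBN$-integral of twisted convolutions on $\BA$. The weight $a^{2\rho/p}$ is exactly the power of the modular function that makes $L^p(\BS)$ (with left Haar measure) isometric, fiberwise, to a mixed-norm space over $\OBN$ with values in $L^p(\BA)$ (with the unimodular Haar measure $da$). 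Thus the hypothesis $a^{2\rho/p}K\in\lu{\OBN;\Cvp{\BA}}$ is precisely the hypothesis under which \cite[Theorem 3.3]{MW} applies, and that theorem yields \eqref{e tp1}.

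Next, to obtain \eqref{e tp2} I would simply bound the inner $\Cvp{\BA}$ norm of each fiber by its $L^1(\BA)$ norm. Indeed, for any kernel $\psi$ on the abelian group $\BA$ one has $\|\psi\|_{\Cvp{\BA}}\le\|\psi\|_{L^1(\BA)}$ by Young's inequality, applied here to $\psi(a)=a^{2\rho/p}K(va)$ for fixed $v$. Integrating over $\OBN$ gives
\[
\bignorm{a^{2\rho/p}K}{\lu{\OBN;\Cvp{\BA}}}
=\int_{\OBN}\bignorm{a^{2\rho/p}K(v\cdot)}{\Cvp{\BA}}\,dv
\le\int_{\OBN}\int_{\BA}a^{2\rho/p}|K(va)|\,da\,dv,
\]
which combined with \eqref{e tp1} is \eqref{e tp2}.

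The only genuine content is the invocation of \cite[Theorem 3.3]{MW}, so the main obstacle — were one to prove it from scratch rather than cite it — would be verifying the fiberwise isometry and the twisted-convolution decomposition, i.e. checking that the modular weight $a^{2\rho/p}$ is the correct one and that the $\OBN$-integral of the fiber operators dominates the operator on $\BS$ in the $\Cvp{}$ norm. Since the excerpt allows us to assume results stated earlier, here this reduces to matching the hypotheses of \cite[Theorem 3.3]{MW} to our setting, which is routine, and the second inequality \eqref{e tp2} is immediate from Young's inequality on $\BA$.
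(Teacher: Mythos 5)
Your proposal is correct and coincides with what the paper does: the paper states the proposition as a direct consequence of \cite[Theorem 3.3]{MW} without further proof, and \eqref{e tp2} indeed follows from \eqref{e tp1} by bounding each fiber norm $\|a^{2\rho/p}K(v\,\cdot)\|_{\Cvp{\BA}}$ by its $L^1(\BA)$ norm via Young's inequality. Nothing is missing.
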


\subsection{Spherical Fourier transform and spherical functions}

The spherical Fourier transform of an integrable function $g$ on $\BG$
is the function $\st g$, defined by
$$
\st g (\la)
= \int_{\BG} g(x) \, \vp_{-\la}(x) \wrt x
\qquad  \la \in \fra_\BC^*
$$
where $\vp_{\la}$ denotes the spherical functions on $\BG$.
Recall that $\st g$ is Weyl-invariant, which, in the rank one case means that it is an even function of $\la.$ The spherical Fourier transform extends to $\BK$--bi-invariant
tempered distributions on $\BG$ (see, for instance, \cite[Ch.~6.1]{GV}).

For ``nice'' $\BK$--bi-invariant functions $g$, the inversion formula is given by
\begin{equation*}
	g(x)
	= \int_{\fra^*} \st g(\la)\, \vp_{\la}(x) \, \wrt \nu(\la),
\end{equation*}
where the Plancherel measure $\nu$ is given by
$
\wrt \nu(\la)
= \bigmod{\bc(\la)}^{-2} \wrt\la,
$
and $\bc$ is the Harish-Chandra function. When applied to the kernel $k_B$ of the operator $B$ the inversion formula shows that
 \begin{equation}
 	\label{e inv km}
 	k_B(x)
 	= \int_{\fra^*} m_B(\la)\, \vp_{\la}(x) \, \wrt \nu(\la).
 \end{equation}
In order to ensure the convergence of various integrals appearing in the proof
we tacitly assume that $m_B$ is pre-multiplied by the factor $\exp(-\vep(\la(H_0))^2),$ $\vep>0.$ This implies that the corresponding kernel given by \eqref{e inv km} is a bounded smooth function. Since in the limit $\vep\to 0^+$ the parameter $\varepsilon>0$ vanishes from our bounds this operation has no effect on our results. 

 To simplify the exposition we often identify  $\C$ with $\fra_{\BC}^*$ using the map $\zeta\mapsto  \zeta \alpha.$ Via this identification the strip $T_p$ in $\C$  coincides with the tube $$\mathcal T_p:=\{\la\in \fra^*_{\BC}\colon |\Ima (\la(H_0))|<\rop\}$$
 in $\fra^*_{\BC}$ 
 and functions on $T_p$ are identified with functions on  $\mathcal T_p.$     Note that when $m_B$ is a bounded holomorphic function in $T_p$ then $m_B(\la\pm i\rop)$ exists as a non-tangential limit for almost every $\la\in \R$, see e.g. \cite[Chapter III]{SW}.

In the rank one case it is well known that there exist constant $C_j,$ $j=0,1,...,$ such that
\begin{equation} 
	\label{f: HCest}
	\bigmod{\partial^{j} \check\bc^{-1}(\la)}
	\leq  C_j \,  \big(1+|\la|\big)^{(n-1)/2-j}
	\qquad  \la: 0\leq \Im \la \leq |\rho|
\end{equation}
where $n$ denotes the dimension of $\BX$ (see, for instance, \cite[Lemma 4.2]{ST} and \cite[Appendix A]{I1}). For further reference we also note that for each $1<p<\infty$ the function $[\omega(\la+ i\rop)\bc(-\la-i\rop)]^{-1}$ satisfies Mikhlin-H\"ormander condition of arbitrary order on $\R,$ hence, it gives rise to a bounded operator on $\lp{\BA}.$ This observation follows from \eqref{eq:omdef} and \eqref{f: HCest}. \label{MHmw} Thus, in view of 
the decomposition
\begin{equation}
	\label{e ocmdecom}
\mbc(\la+ i\rop)
=[\omega(\la+ i\rop)\bc(-\la-i\rop)]^{-1}\, [\omega m_B](\la + i\rop)
\end{equation}
we see that if $[\omega m_B](\cdot + i\rop)$ is an $L^p(\BA)$ Fourier multiplier then also $\mbc(\cdot + i\rop)$ is an $L^p(\BA)$ Fourier multiplier.

At infinity, the celebrated Harish-Chandra expansion states that for $a$ in $\BA^+$ 
\begin{equation*} 
\vp_{\la}(a )
= a^{-\rho+i\la}\, \bc(\la)\, [1+\om(\la,a)]
      +a^{-\rho -i\la}\, \bc(-\la)\, [1+\om(-\la,a)],
\end{equation*}
where \begin{equation}
	\label{e ome}
	\om(\la,a) = \sum_{\ell=1}^{\infty}\, \Ga_{\ell}(\la)\, a^{-2\ell\al}
\end{equation}
and there exist constants $b_j$ such that the coefficients
$\Ga_\ell$ satisfy the following estimates \cite[Appendix A]{I1}
\begin{equation} 
\label{eq:GamEst}
\bigmod{\partial^j_{\la}\Ga_{\ell}(\la)}
\leq C_j\,  \ell^{b_j}\,  \big(1+|\Re \la|\big)^{-j},
\end{equation}
for $\ell\geq 1$ and $0\leq \Im \la \leq \rho$. Inequalities \eqref{eq:GamEst} imply the bound
\begin{equation} \label{f: estIonescu}
\bigmod{\partial^j_{\la}\om(\la,a)}+\bigmod{a\partial_a\partial^j_{\la}\om(\la,a)}
\leq C_j \, (1+|\Re \la|)^{-j}
\asymp (1+|\la|)^{-j},
\end{equation}
valid for all integers $j=0,1,\ldots$ whenever  $\al(\log a)\geq 1/2$ and $0\leq \Im \la\leq \rho.$

The expansions of spherical functions near the origin were established in \cite[Theorem 2.1]{ST}. We do not recall these expansions here as there are not explicitly needed in our paper.  This is because the local parts of the considered kernels will be treated by a reference to known results. 

For further reference we note that if  $m$ is a Weyl-invariant function on $\fra^*$ then
\begin{equation} \label{f: inversion infinity}
k_B(a)
= { 2} \int_{\fra^*} a^{i\la-\rho}\, \big[1+\om(\la,a) \big] \, \mbc (\la)  \wrt \la 
\qquad  a \in \BA^+,
\end{equation}
see \cite[Remark 2.1]{MW}.

\section{Estimates for local and global parts of the kernel}

\label{s:elg}

We split the kernel $k_B$ into several parts and treat each of them separately. 

 We start with distinguishing the local and global parts of $k_B.$  Let $\bC$ be a smooth even function on $\BR$
that is equal to~$1$ on $[-2,2]$, vanishes outside of the interval $[-4,4]$ and satisfies $0\leq \bC \leq 1$. 
Define a $\BK$--bi-invariant function $\eta$ on $\BG$ by 
$$
\eta(a) 
:= \bC \big(\al(\log a) \big)
\qquad  a \in \BA.  
$$ 
We decompose $k_B$ into its local and global parts via
\begin{equation*}
	k_B=k_{\loc}+k_{\glo}:=\eta k_B+(1-\eta)k_B.
\end{equation*}
This leads to the corresponding decomposition for the operator $B$, namely,
\begin{equation*}
	Bf=f*k_{\loc}+f*k_{\glo}:=B_{\loc}f+B_{\glo} f.
\end{equation*}

The local part $B_{\loc}$ will be easier to estimate. This is done in Section \ref{sec:loc}, mostly by referring to the literature. The underlying transference principle is that of Coifman and Weiss \cite{CW}.

 To estimate the global part $B_{\glo}$ we need to split it further. The underlying idea is that of Remark \ref{rem:informal}. The splittings are based according to which of the terms $a^{-\alpha}$ or $(a\exp(H(v)))^{\alpha}$ is dominant in Lemma \ref{lem:CtoI}. Then we apply a transference principle from \cite{CMW}. This is accomplished in Section \ref{sec:glob}.

\subsection{The local part}
\label{sec:loc}
It is not hard to see that under the assumptions of Theorem \ref{t: main} or Theorem \ref{t: main i} the operator $B_{\loc}$ is bounded on $L^p(\BX).$ In case of Theorem \ref{t: main} this will be deduced with the help of the following lemma.
\begin{lem}[{\cite{GMM}}]
	\label{lem:locm}
	Let $p\in (1,\infty)$ and assume that $m$ is a bounded holomorphic function in $T_p$. If $\la\mapsto m(\la\pm i\rop)$ is an $\lp{\BA}$ Fourier multiplier, then also $m(\la)$ is an $\lp{\BA}$ Fourier multiplier; moreover, 
	\begin{equation*}
		\|m\|_{\Mp{\BA}}\lesssim \| m(\cdot \pm i\rop)\|_{\Mp{\BA}}.
	\end{equation*}
	
\end{lem}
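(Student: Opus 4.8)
The statement to prove is Lemma~\ref{lem:locm}: if $m$ is bounded and holomorphic in $T_p$ and the boundary trace $m(\cdot\pm i\rop)$ is an $\lp{\BA}$ Fourier multiplier, then $m$ itself is an $\lp{\BA}$ Fourier multiplier, with the displayed norm control. The natural approach is a complex-interpolation / subordination argument exploiting analyticity in the strip $T_p$. Since $\BA\cong(\R,+)$ with the multiplicative parametrization $a=\exp(tH_0)$, I would identify $\lp{\BA}\cong\lp{\R}$ and view $m$ as a bounded holomorphic function on the horizontal strip $\{|\Im\zeta|<\rop\}\subset\C$, whose boundary values $m(\cdot\pm i\rop)$ lie in $\Mp{\R}$. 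The goal is then to recover $m|_\R$ as a controlled average of these boundary data.

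\textbf{Key steps.} First I would represent $m$ on the real line by the Poisson integral for the strip: there is an explicit Poisson kernel $p_{\rop}(x,y)$ for $\{|\Im\zeta|<\rop\}$ (essentially $\tfrac{1}{2\rop}\,\mathrm{sech}$-type kernels) such that, for a bounded holomorphic $m$ with boundary traces in $L^\infty$,
\begin{equation*}
	m(x) = \int_{\R} P^+_{\rop}(x-y)\,m(y+i\rop)\,dy + \int_{\R} P^-_{\rop}(x-y)\,m(y-i\rop)\,dy ,
\end{equation*}
where $P^\pm_{\rop}\ge 0$ and $\|P^+_{\rop}\|_{L^1(\R)}+\|P^-_{\rop}\|_{L^1(\R)}=1$. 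This is the standard Phragm\'en--Lindel\"of / Poisson representation in a strip (see \cite[Chapter III]{SW}), and the premultiplication of $m_B$ by the Gaussian factor noted on p.~\pageref{MHmw} guarantees enough decay for the representation to hold with no extra hypotheses. Second, the Fourier multiplier whose symbol is $m(x)$ acts on $\lp{\R}$ as the superposition, against the probability measure $P^+_{\rop}(y)\,dy$ (resp.\ $P^-_{\rop}$), of the \emph{modulated} multipliers with symbols $x\mapsto m(y+i\rop)$ translated by $y$; but translation of the symbol corresponds to modulation of the kernel, which is an isometry of $\Cvp{\R}$ (the same fact $\|a^{i\la}\kappa(a)\|_{\Cvp{\BA}}=\|\kappa\|_{\Cvp{\BA}}$ used in the proof of Lemma~\ref{lem Mel}). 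Third, by Minkowski's integral inequality applied to the operator-norm--valued integral,
\begin{equation*}
	\|m\|_{\Mp{\R}} \le \int_{\R} P^+_{\rop}(y)\,\|m(\cdot + i\rop)\|_{\Mp{\R}}\,dy + \int_{\R} P^-_{\rop}(y)\,\|m(\cdot - i\rop)\|_{\Mp{\R}}\,dy \lesssim \|m(\cdot\pm i\rop)\|_{\Mp{\R}},
\end{equation*}
which is exactly the claimed estimate after transporting back to $\BA$.

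\textbf{Main obstacle.} The only delicate point is justifying the Poisson representation of $m$ on the strip from the boundary values alone: one must know that $m$ does not pick up an additive harmonic piece (a term linear in $\Im\zeta$, or an exponentially growing correction) that is invisible to the $L^1$ Poisson kernel. This is precisely where boundedness of $m$ on the whole strip is used, via a Phragm\'en--Lindel\"of argument: the difference between $m$ and its Poisson integral is bounded and harmonic on the strip with zero boundary trace (a.e.), hence identically zero. Once this representation is in hand, the rest is the routine Minkowski/modulation argument above, and since the result is quoted from \cite{GMM} I would only sketch it and refer there for the details.
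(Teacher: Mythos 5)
Your argument is correct, but it takes a genuinely different (though closely related) route from the paper's. The paper reproduces the Giulini--Mauceri--Meda argument: it defines an analytic family of distributions $\nu_z$ on $\BA$ by $\mF \nu_z(\cdot)=m(\cdot+i\rop z)$ for $-1\le\Re z\le 1$, observes that on the boundary lines $\nu_{\pm 1+iy}=e^{\pm i\rop y}\nu_{\pm1}$ is a unimodular modulation of $\nu_{\pm1}$ (hence has constant $\Cvp{\BA}$ norm in $y$), and applies Stein's complex interpolation theorem to $R_zf=f*\nu_z$, evaluating at $z=0$. You instead represent the symbol $m|_{\R}$ as the Poisson integral of its two boundary traces on $\Im\zeta=\pm\rop$, use that translating a symbol amounts to modulating the kernel (the same isometry of $\Cvp{\BA}$ that underlies the paper's boundary-line observation), and close with Minkowski's inequality. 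Both proofs rest on the same two pillars --- boundedness and holomorphy in the strip plus modulation invariance of the convolutor norm --- but they package them differently: yours avoids the black-box appeal to Stein interpolation and yields the explicit constant $\|m\|_{\Mp{\BA}}\le\max_{\pm}\|m(\cdot\pm i\rop)\|_{\Mp{\BA}}$, at the price of justifying the Poisson representation, i.e.\ that a bounded harmonic function on the strip with vanishing a.e.\ non-tangential boundary values is identically zero. You correctly identify this as the one delicate point; it follows by conformal transplantation to a half-plane, and the Gaussian premultiplication you invoke is not actually needed for it. The paper's route hides the analogous issue inside the hypotheses of Stein's theorem (admissible growth of the analytic family), so neither proof is more demanding than the other; both are complete modulo these standard facts.
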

\begin{proof}
	The proof is essentially contained in \cite[p.\ 172-173]{GMM}, so we shall be brief. For every $z\in \C$ such that $-1\le \Re z\le 1$ we define the tempered distribution on $\R$ by
	$$\mF \nu_z(\cdot )=m(\cdot+i\rop z),$$
	in particular $\mF \nu_{\pm 1}=m(\cdot\pm i\rop).$ Moreover, for every $y\in \R$ we have
	\[
	\nu_{1+iy}=e^{i\rop y}\nu_1,\qquad \nu_{-1+iy}=e^{-i\rop y}\nu_{-1}.
	\]
	Hence, applying Stein's complex interpolation theorem to the analytic family of operators
	$R_z=f*\nu_z$ we conclude that $R_0$ is bounded on $\lp{\BA}.$ Since $\mF \nu_0(\cdot)=m(\cdot)$ the proof is completed.    
	
\end{proof}

Using Lemma \ref{lem:locm} and the results from \cite{GMM} we see that under the assumptions of Theorem \ref{t: main} the operator $B_{\loc}$ is bounded on $L^p(\BX).$ 

\begin{pro} \label{p: main loc} 
	
	Take $p\in (1,\infty)\setminus \{2\}$.  Then, under the assumptions of Theorem \ref{t: main} the operator $B_{\loc}$ is bounded on $\lp{\BX}$.
	
\end{pro}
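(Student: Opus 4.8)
The plan is to reduce the $L^p(\BX)$ boundedness of $B_{\loc}$ to the $L^p(\BA)$ multiplier statement of Lemma \ref{lem:locm} via the Coifman--Weiss transference principle, exactly as in \cite{GMM}. First I would record that $k_{\loc}=\eta k_B$ is a compactly supported $\BK$--bi-invariant distribution on $\BG$ whose spherical Fourier transform, call it $m_{\loc}$, is obtained from $m_B$ by convolution (on the Fourier side) with a fixed Schwartz function; in particular $m_{\loc}$ is again a bounded Weyl-invariant holomorphic function on $T_p$, and for the non-tangential boundary values one has $m_{\loc}(\cdot\pm i\rop)=m_B(\cdot\pm i\rop)*\Psi$ for a fixed $\Psi\in\cS(\R)$ independent of $m_B$. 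Hence $m_{\loc}(\cdot\pm i\rop)$ is an $L^p(\R)$ Fourier multiplier whenever $(\w m_B)(\cdot+i\rop)$ is — using the decomposition \eqref{e ocmdecom} together with the fact, noted on p.~\pageref{MHmw}, that $[\w(\la+i\rop)\bc(-\la-i\rop)]^{-1}$ satisfies a Mikhlin--H\"ormander condition of all orders, so that $\mbc(\cdot+i\rop)=[\omega m_B](\cdot+i\rop)\cdot[\w\bc]^{-1}(\cdot+i\rop)$ and then $m_B(\cdot+i\rop)$ itself (dividing back by $\check\bc^{-1}$, which is again a good multiplier by \eqref{f: HCest}) is an $L^p(\R)$ multiplier; convolving with the Schwartz kernel $\Psi$ preserves this.

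Second I would apply Lemma \ref{lem:locm} to $m=m_{\loc}$: it gives $\|m_{\loc}\|_{\Mp{\BA}}\lesssim \|m_{\loc}(\cdot\pm i\rop)\|_{\Mp{\BA}}<\infty$, so the convolution operator on $\BA$ (equivalently on $\R$ with multiplicative coordinates) associated to $m_{\loc}$ is bounded on $L^p(\BA)$. Third, and this is where the Coifman--Weiss machinery enters, I would invoke the transference principle of \cite{CW}: since $\BX$ is a (manifold with a) space of homogeneous type under the Riemannian distance and the convolution structure on $\BG$ is amenable in the relevant sense, the $L^p(\BA)$ boundedness of a spherical multiplier whose kernel is supported where $\alpha(\log a)\le 4$ transfers to $L^p(\BX)$ boundedness of the associated $\BG$-convolution operator $B_{\loc}$, with norm controlled by $\|m_{\loc}\|_{\Mp{\BA}}$. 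This is precisely the argument carried out in \cite[pp.~172--173]{GMM} for the local part, and I would cite it rather than reproduce it.

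The one genuine point requiring care — the main obstacle — is making rigorous the passage ``$m_B(\cdot\pm i\rop)$ is an $L^p$ multiplier $\Rightarrow$ $m_{\loc}(\cdot\pm i\rop)$ is an $L^p$ multiplier,'' i.e.\ identifying the boundary multiplier of the localized kernel $\eta k_B$ and showing the localization amounts to convolution with a fixed Schwartz function on the Fourier side, uniformly in the $\vep$-regularization of \eqref{e inv km}. This is routine given that $\eta$ is a fixed smooth compactly supported bi-invariant function and the spherical transform turns pointwise multiplication of bi-invariant functions into convolution of their transforms against a fixed rapidly decaying kernel; the holomorphy in $T_p$ is preserved because the convolving kernel decays fast enough to permit shifting the contour. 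Once this is in hand, everything else is a citation. I would therefore write the proof as: reduce to Lemma \ref{lem:locm} for $m_{\loc}$ via \eqref{e ocmdecom} and \eqref{f: HCest}, then quote \cite{GMM} (and \cite{CW}) for the transference of the resulting $L^p(\BA)$ bound to $L^p(\BX)$.
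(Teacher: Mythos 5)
Your high-level plan (pass through Lemma \ref{lem:locm} and then cite \cite{GMM}) is the right one, but the two steps you yourself identify as the technical core do not hold up, and the paper organizes the proof differently precisely so as to avoid them. First, the assertion that the spherical transform of $\eta k_B$ is $m_B$ convolved with a fixed Schwartz function is unjustified: the spherical transform on $\BX$ does not convert multiplication by a $\BK$--bi-invariant cutoff into Euclidean convolution on $\fra^*$, since spherical functions do not satisfy $\vp_\la\vp_\mu=\vp_{\la+\mu}$. Identifying the behaviour of the localized kernel is exactly the content of \cite[Proposition~3.2]{GMM} (via the Stanton--Tomas local expansion), and it cannot be dismissed as routine. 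Second, your reduction to ``$m_B(\cdot+i\rop)$ itself is an $L^p(\R)$ multiplier, by dividing back by $\check\bc^{-1}$'' is illegitimate: in rank one $\check\bc^{-1}$ vanishes at the origin, so its reciprocal is not a bounded multiplier; and even if it were, discarding the weight $\omega\asymp(1+|\la|)^{(n-1)/2}$ is fatal, because the local part of $B$ is governed by $n$-dimensional Euclidean analysis and an unweighted one-dimensional bound on $m_B$ does not transfer to $\lp{\BX}$. The ``Coifman--Weiss transference from $\BA$ to $\BX$'' you invoke is likewise not the mechanism for the local part; what \cite[Proposition~3.2]{GMM} requires as input is that $m_B(\la)\bc^{-1}(-\la)$ be an $L^p(\R)$ multiplier \emph{on the real line}, and your chain of reductions never produces that statement.

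The paper's proof is shorter and never localizes the multiplier: since $m_B$, hence $\omega m_B$, is even, hypothesis (1) of Theorem \ref{t: main} gives both boundary values $(\omega m_B)(\cdot\pm i\rop)\in\Mp{\BA}$; Lemma \ref{lem:locm} applied to $m=\omega m_B$ on the full strip then yields $\omega m_B\in\Mp{\BA}$ on the central line $\R$; the factorization $m_B(\la)\bc^{-1}(-\la)=[\omega m_B](\la)\,[\omega(\la)\bc(-\la)]^{-1}$, together with the fact that the second factor satisfies a Mikhlin--H\"ormander condition (by \eqref{eq:omdef} and \eqref{f: HCest}), shows that $m_B\,\bc^{-1}(-\cdot)\in\Mp{\BA}$, which is precisely the hypothesis of \cite[Proposition~3.2]{GMM}. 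In particular the interpolation lemma is applied to $\omega m_B$, not to a localized multiplier, so no identification of $\st(\eta k_B)$ is ever needed. To repair your argument you would replace your first and third paragraphs by this factorization and the citation of \cite[Proposition~3.2]{GMM}.
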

\begin{proof}
	Since $m_B$ is Weyl-invariant we see that both $m(\cdot+i \rho_p)$ and $m(\cdot-i \rho_p)$ are $L^p(\BA)$ Fourier multipliers.    Applying Lemma \ref{lem:locm} with $m=\omega m_B$ we see that  $m$  is an $\lp{\BA}$ Fourier multiplier. Decompose
	\[
	m_B(\la) \bc^{-1}(-\la)=m(\la) [\omega(\la)\bc(-\la)]^{-1}.
	\]
	Recall that $[\omega(\la)\bc(-\la)]^{-1}$ is an $\lp{\BA}$ Fourier multiplier, cf.\ comments after \eqref{f: HCest} on p.\ \pageref{MHmw}. Thus also $m_B(\la) \bc^{-1}(-\la)$ is an $\lp{\BA}$ Fourier multiplier. Now, \cite[Proposition 3.2]{GMM} implies that $B_{loc}$ is bounded on $\lp{\BX}.$ 
\end{proof}

Using Lemma \ref{lem:locm} and \cite{I2} we now show that under the assumptions of Theorem \ref{t: main i} the operator $B_{\loc}$ is also bounded on $L^p(\BX).$
\begin{pro} \label{p: main loc i} 
	Take $p\in (1,\infty)\setminus \{2\}.$ Then, under the assumptions of Theorem \ref{t: main i} the operator $B_{\loc}$ is bounded on $\lp{\BX}$.
\end{pro}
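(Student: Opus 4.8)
The plan is to split $m_B$ according to the size of the frequency variable and to bound the two resulting local operators by two \emph{different} transference results. For the low frequencies I would argue exactly as in Proposition \ref{p: main loc} and conclude via \cite[Proposition 3.2]{GMM}, with the compact support of a cut-off taking over the role that the weight $\omega$ played there; for the high frequencies assumption (2) of Theorem \ref{t: main i} supplies a genuine Mikhlin-H\"ormander bound, so the local part of Ionescu's argument in \cite[Theorem 8]{I2} (which rests on the near-origin spherical function expansions of \cite[Theorem 2.1]{ST}) applies directly. Concretely, fix a smooth even $\psi$ on $\R$ with $\psi\equiv 1$ on $[-1,1]$, $\supp\psi\subseteq[-2,2]$ and $0\le\psi\le 1$, and write $m_B=\psi m_B+(1-\psi)m_B$ on $\R$. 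By linearity of the inversion formula \eqref{e inv km} and of the local cut-off $k\mapsto\eta k$, this induces a splitting $B_{\loc}=B_{\loc}^{(0)}+B_{\loc}^{(\infty)}$, where $B_{\loc}^{(0)}$ and $B_{\loc}^{(\infty)}$ are the local parts of the operators whose spherical multipliers are $\psi m_B$ and $(1-\psi)m_B$; it then suffices to bound each summand on $\lp{\BX}$.

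For $B_{\loc}^{(0)}$ I would first apply Lemma \ref{lem:locm} to $m_B$ itself: by assumption (1) and Weyl-invariance both $m_B(\cdot+i\rop)$ and $m_B(\cdot-i\rop)$ are $\lp{\BA}$ Fourier multipliers, so the lemma gives $m_B\in\Mp{\BA}$, and hence $\psi m_B\in\Mp{\BA}$ since $\psi$ is a Mikhlin-H\"ormander function on $\R$. Choosing a smooth, compactly supported $\wt\psi$ equal to $1$ on $\supp\psi$, the function $\wt\psi(\la)\,\bc^{-1}(-\la)$ is smooth with compact support (using that $\check\bc^{-1}$ is smooth on $\R$, cf.\ \eqref{f: HCest}), hence lies in $\Mp{\BA}$; consequently $(\psi m_B)(\la)\,\bc^{-1}(-\la)=(\psi m_B)(\la)\cdot\wt\psi(\la)\bc^{-1}(-\la)$ is an $\lp{\BA}$ Fourier multiplier, and \cite[Proposition 3.2]{GMM} gives the boundedness of $B_{\loc}^{(0)}$ on $\lp{\BX}$.

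For $B_{\loc}^{(\infty)}$ I would check that $(1-\psi)m_B$ satisfies the Mikhlin-H\"ormander condition of order $N$ on $\R$: it vanishes on $[-1,1]$, while for $|\la|>1$ the Leibniz rule, assumption (2) (valid there since $|\Re\la|=|\la|>1$) and the boundedness of the derivatives of $1-\psi$ give $\bigmod{\partial^j[(1-\psi)m_B](\la)}\le C\,(1+|\la|)^{-j}$ for $j=0,\dots,N$. Restricting the hypothesis of \cite[Theorem 8]{I2} to the real line produces precisely such a Mikhlin-H\"ormander condition, and the corresponding local part of Ionescu's argument bounds the associated local operator on $\lp{\BX}$ once the order exceeds $(n+3)/2$; applying it to $(1-\psi)m_B$ yields the boundedness of $B_{\loc}^{(\infty)}$. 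Adding the two estimates finishes the proof.

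The step I expect to need the most care is not an individual estimate, which are all routine, but the structural point that the local part must be split in frequency and its two pieces routed through two incompatible transference principles. Indeed $(\psi m_B)\,\bc^{-1}(-\cdot)$ is a bona fide $\lp{\BA}$ multiplier only because $\psi$ has compact support: for the full $m_B$ the factor $\bc^{-1}(-\la)$ grows like $(1+|\la|)^{(n-1)/2}$ and, in the absence of the weight $\omega$, cannot be absorbed, so \cite[Proposition 3.2]{GMM} is unavailable there; conversely the Mikhlin-H\"ormander route is available only on $\{|\la|>1\}$, which is exactly the range covered by assumption (2). I would also double-check that the transition region $|\la|\approx 1$ causes no trouble --- there $1-\psi$ and its derivatives are bounded and assumption (2) still applies, so the product bound above holds uniformly for all $\la\in\R$.
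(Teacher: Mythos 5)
Your argument is correct, but it takes a genuinely different and somewhat longer route than the paper. The paper's proof is essentially one observation: since $m_B$ is bounded and holomorphic in the open strip $T_p$, Cauchy's estimates on disks of radius comparable to $\rop$ give $\bigmod{\partial^j m_B(\la)}\le C_j\le C_j\mod{\la}^{-j}$ for $\mod{\la}\le 1$, so assumption (2) automatically upgrades to the full Mikhlin--H\"ormander condition \eqref{eq:MihR} on all of $\R$, and a single reference to \cite[Section 5, pp.\ 271--272]{ST} then bounds $B_{\loc}$. In particular, the premise driving your decomposition --- that ``the Mikhlin--H\"ormander route is available only on $\{\mod{\la}>1\}$'' --- is not accurate: holomorphy plus boundedness supplies the missing near-origin derivative bounds for free, no frequency splitting is needed, and assumption (1) plays no role in the local part. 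Your alternative handles the low frequencies by Lemma \ref{lem:locm} together with \cite[Proposition 3.2]{GMM}, which does work (the multiplication by the compactly supported $\wt\psi\,\bc^{-1}(-\cdot)$ and by $\psi$ preserves $\Mp{\BA}$, and the reflection argument for $m_B(\cdot-i\rop)$ is the same one the paper uses in Proposition \ref{p: main loc}), and the high-frequency Leibniz computation is fine; what your route buys is nothing beyond the paper's, at the cost of invoking assumption (1) and two separate transference mechanisms where one suffices.
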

\begin{proof}
	Condition (2) in Theorem \ref{t: main i} together with the fact that $m$ is holomorphic imply that $m_B$ satisfies a Mikhlin-H\"ormander condition on $\R,$ namely, 
\begin{equation}
		\label{eq:MihR}
		\bigmod{\partial^jm_B(\la)}
		\leq C\, |\la|^{-j}
		\qquad \la\in \R,\quad  j=0,\ldots, N,
	\end{equation} 
	where $N>(n+3)/2.$ It is known that if \eqref{eq:MihR} holds, then the local part $B_{\loc}$ is a bounded operator on $\lp{\BX},$ see e.g. \cite[Section 5, pp. 271-272]{ST}. This completes the proof.

\end{proof}

\subsection{The global part}
\label{sec:glob}
The global part of the kernel is a $\BK$--bi-invariant distribution. Thus, the $L^p(\BX)$ boundedness of $B_{\glo}$ is equivalent to the $L^p(\BS)$ boundedness of $Tf:=f*_{\BS} K,$ where $K\colon \BS \to \C$ is defined by $K(va)=k_{\glo}(va)$. Recall that $\BS=\OBN \BA.$   Let ${\bf H}$ be a smooth function on $\R$ which vanishes on $(-\infty,-2]$ and equals $1$ on $[2,\infty).$ Denote
\begin{equation*}
	\chi(a)={\bf H}(\alpha(\log a)),\qquad a\in \BA
\end{equation*}    
and define the splitting kernels for $K=K_1+K_2$ by
\begin{align}
\label{e K1}	K_1(va)&=\chi\left(a\exp\bigg(\frac12 H(v)\bigg)\right)K(va),\\
\label{e K2}	K_2(va)&=\left(1-\chi\left(a\exp\bigg(\frac12 H(v)\bigg)\right)\right)K(va),
\end{align}
and the corresponding operators by $B_1=f*K_1$ and  $B_2=f*K_2.$  The threshold $a\exp(2^{-1} H(v))$ comes from Lemma \ref{lem:CtoI}. Indeed, we have $$a^{-\alpha}\le (a\exp(H(v)))^{\alpha}\qquad\textrm{precisely when}\qquad ((a\exp(2^{-1} H(v))))^{\alpha}\ge 1.$$ Note that $K_1$ vanishes when $a^{\alpha}\le (\exp(-2^{-1}H(v)))^{\alpha}\cdot e^{-2},$ while $K_2$ vanishes when $a^{\alpha}\ge (\exp(-2^{-1}H(v)))^{\alpha}\cdot e^{2}.$

We will approximate the splitting kernels \eqref{e K1},\eqref{e K2} by the approximating kernels
\begin{align}
	\label{e aK1}	\tK_1(va)&=\chi\left(a\exp\bigg(\frac12 H(v)\bigg)\right)K(a\exp(H(v))),\\
	\label{e aK2}	\tK_2(va)&=\left(1-\chi\left(a\exp\bigg(\frac12 H(v)\bigg)\right)\right)K(a^{-1}).
\end{align}
Again, the form of these kernels is a consequence of Lemma \ref{lem:CtoI}, see also Remark \ref{rem:informal}. Namely, $[va]_+$ is of the order $a\exp(H(v))$ on the support of $K_1$ and it is of the order $a^{-1}$ on the support of $K_2.$

In what follows for $q\in (1,2)$ we define a kernel $\ka^q$ on $\BA$ by 
\begin{equation}
	\label{e Kq}
	\ka^q(a)=a^{2\rho/q}K(a),\qquad a\in \BA.
\end{equation}
\begin{pro}
	\label{pro approx trans}
	Let $p\in(1,2).$ Then  we have
	\begin{equation*}
\| \tK_1\|_{\Cvp{\BS}}\le C_p \left\|\ka^p\right\|_{\Cvp{\BA}}
	\end{equation*} 
and
	\begin{equation*}
\| \tK_2\|_{\Cvp{\BS}}\le C_p \left\|\ka^p\right\|_{\Cvp{\BA}}.
\end{equation*} 
\end{pro}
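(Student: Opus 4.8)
The goal is to bound the $\Cvp{\BS}$ norms of the two approximating kernels $\tK_1$ and $\tK_2$ in terms of $\|\ka^p\|_{\Cvp{\BA}}$. Since $\BS=\OBN\BA$ is the semidirect product described in the excerpt, with $da\,dv$ a right Haar measure, the natural tool is Proposition \ref{t: Transference principle}: for a kernel $L$ on $\BS$ we have $\|L\|_{\Cvp{\BS}}\le \|a^{2\rho/p}L\|_{\lu{\OBN;\Cvp{\BA}}}$. So the plan is, for each $i=1,2$, to view $\tK_i(v\,\cdot)$ for fixed $v\in\OBN$ as a kernel on $\BA$, show that its $a^{2\rho/p}$-weighted $\Cvp{\BA}$ norm is controlled by $\|\ka^p\|_{\Cvp{\BA}}$ times a function of $v$ that is integrable on $\OBN$, and then integrate in $v$.

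For $\tK_1$ the point is that $\tK_1(va)=\chi(a\exp(\tfrac12 H(v)))\,K(a\exp(H(v)))$, which is (a cutoff of) a translate in $a$ of $K$. Writing $a=a'\exp(-H(v))$ so that $a\exp(H(v))=a'$, the function $a\mapsto K(a\exp(H(v)))$ is the left translate of $K$ by $\exp(-H(v))$; since translations act isometrically on $\Cvp{\BA}$, its $\Cvp{\BA}$ norm equals $\|K\|$ on the relevant range. More precisely I would factor $a^{2\rho/p}\tK_1(va)$ so as to produce exactly $\ka^p(a\exp(H(v)))=(a\exp(H(v)))^{2\rho/p}K(a\exp(H(v)))$ times a smooth scalar multiplier in $a$ (namely $\chi(a\exp(\tfrac12 H(v)))$ times the power weight $(a^{2\rho/p})/((a\exp H(v))^{2\rho/p})=\exp(-\tfrac{2\rho}{p}H(v))$, a constant in $a$). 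Here Lemma \ref{lem Mel} enters: the cutoff factor $\phi_v(a)=\chi(a\exp(\tfrac12 H(v)))$ together with its $D^2\phi_v$ have $L^1(\BA)$ norms uniformly bounded in $v$ (the support of $\phi_v$ in the variable $\alpha(\log a)$ is a half-line $[-2-\alpha(\tfrac12 H(v)),\infty)$, but multiplied against the rapidly decaying $\ka^p$ this is fine — alternatively one localizes first). After applying Lemma \ref{lem Mel} one is left with $\|\ka^p(a\exp(H(v)))\|_{\Cvp{\BA}}=\|\ka^p\|_{\Cvp{\BA}}$ by translation invariance, times the extra weight $\exp(-\tfrac{2\rho}{p}H(v))=P(v)^{2/p}$ in the notation \eqref{e Pv}. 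Integrating in $v$ and invoking \eqref{e Pv'} with $q=2/p>1$ gives the claimed bound for $\tK_1$.

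For $\tK_2$ the argument is parallel but uses the opposite branch: $\tK_2(va)=(1-\chi(a\exp(\tfrac12 H(v))))\,K(a^{-1})$. Now the relevant object is $a\mapsto K(a^{-1})$, the reflection of $K$ through the identity of $\BA$; reflection is again an isometry of $\Cvp{\BA}$ since $\BA$ is abelian, so $\|a^{2\rho/p}K(a^{-1})\|_{\Cvp{\BA}}$ relates to $\|a^{-2\rho/p}K(a)\|_{\Cvp{\BA}}=\|\ka^{p'}\|$—but $p\in(1,2)$ forces me to be careful: the weight produced by $\tK_2$ is $a^{2\rho/p}$ evaluated at $a$, i.e. after the substitution $a\mapsto a^{-1}$ it becomes $a^{-2\rho/p}K(a)$, and on the support of $1-\chi$ we have $a^{\alpha}\lesssim \exp(-\tfrac12 H(v))^{\alpha}$, so $a^{-2\rho/p}=a^{-2\rho/p}$ is comparable to $(a\cdot \exp(H(v)))^{2\rho/p}\exp(-\tfrac{2\rho}{p}H(v))$ on that region — again producing the factor $P(v)^{2/p}$ after extracting $\ka^p(a^{-1}\exp(-H(v)))$ or rather arranging matters so that the translation-invariant object $\ka^p$ reappears. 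The smooth cutoff $1-\chi(a\exp(\tfrac12 H(v)))$ is handled by Lemma \ref{lem Mel} exactly as before, with $L^1(\BA)$ and $D^2$ norms uniform in $v$. Integration in $v$ with \eqref{e Pv'} finishes the proof.

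\textbf{Main obstacle.}
The delicate point is bookkeeping the weights so that the \emph{translation-invariant} kernel that survives the application of Lemma \ref{lem Mel} is exactly $\ka^p$ (and not some $v$-dependent or $p'$-dependent variant), while the leftover $v$-dependent scalar factor is precisely a power $P(v)^{2/p}$ of the Harish-Chandra density with exponent $2/p>1$, so that \eqref{e Pv'} applies and the $v$-integration converges. One must use the support conditions on $\chi$ (noted right after \eqref{e aK2}: $\tK_1$ vanishes unless $a^{\alpha}\gtrsim \exp(-\tfrac12 H(v))^{\alpha}$, and dually for $\tK_2$) to justify replacing the various power weights by the single clean exponential factor $\exp(-\tfrac{2\rho}{p}H(v))$ up to a bounded multiplicative constant, and one must check that the $L^1(\BA)$ and $D^2$-norms of the cutoffs $\phi_v$ are bounded uniformly in $v$ — this is where the precise form of $\chi$ as a composition with $\alpha(\log\cdot)$, hence a fixed profile merely translated in the additive variable, is used.
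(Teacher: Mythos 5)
Your overall strategy is the paper's: transfer via Proposition \ref{t: Transference principle} to $\lu{\OBN;\Cvp{\BA}}$, pull out the factor $P(v)^{2/p}=\exp(-\tfrac{2\rho}{p}H(v))$, use translation/reflection invariance of $\Cvp{\BA}$ to recover $\|\ka^p\|_{\Cvp{\BA}}$, handle the cutoffs with Lemma \ref{lem Mel}, and integrate in $v$ using \eqref{e Pv'}. But there is a genuine gap at the step where you apply Lemma \ref{lem Mel} to the cutoff $\phi_v(a)=\chi\big(a\exp(\pm\tfrac12 H(v))\big)$ (and to $1-\phi_v$ for $\tK_2$): these functions equal $1$ on a half-line in the variable $\alpha(\log a)$, so $\|\phi_v\|_{L^1(\BA)}=\infty$ and the lemma simply does not apply. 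Your two proposed escapes do not work. ``Multiplied against the rapidly decaying $\ka^p$ this is fine'' is not available: $\ka^p$ is only assumed to lie in $\Cvp{\BA}$, no decay is known, and in any case the $\Cvp{\BA}$ norm is not monotone under pointwise domination, so neither decay of the kernel nor your later ``comparability of weights on the support region'' argument for $\tK_2$ can be used to replace one weight by another inside a $\Cvp{\BA}$ norm.

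The paper's fix is an exact algebraic decomposition rather than a comparison. After reducing by multiplication invariance to $\|\chi(a\exp(-\tfrac12 H(v)))\,a^{2\rho/p}K(a)\|_{\Cvp{\BA}}$, one writes $\chi(\cdot\exp(-\tfrac12 H(v)))=1-(1-\chi(\cdot))-\psi_v(\cdot)$ with $\psi_v(a)=\chi(a)-\chi(a\exp(-\tfrac12 H(v)))$. The constant $1$ contributes exactly $\|\ka^p\|_{\Cvp{\BA}}$. For the piece $1-\chi(a)$, which is also not in $L^1(\BA)$, one flips the weight using evenness of $K$: $(1-\chi(a))a^{2\rho/p}K(a)=\varphi(a)\cdot a^{-2\rho/p}K(a)$ with $\varphi(a)=(1-\chi(a))a^{4\rho/p}$; now $\varphi\in L^1(\BA)$ because the exponential weight kills the half-line, and $\|a^{-2\rho/p}K(a)\|_{\Cvp{\BA}}=\|\ka^p\|_{\Cvp{\BA}}$ by reflection. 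Finally $\psi_v$ is supported on an interval of length $\asymp 1+|\alpha(H(v))|$, so $\|\psi_v\|_{L^1}+\|D^2\psi_v\|_{L^1}\lesssim 1+|\alpha(H(v))|$ --- not uniformly bounded in $v$, contrary to your claim, which is precisely why \eqref{e Pv'} is stated with the extra factor $(1+|\alpha(H(v))|)$. For $\tK_2$ the same weight-flip gives the clean identity $\big(a\exp H(v)\big)^{2\rho/p}\tK_2(va)=\varphi\big(a\exp(\tfrac12 H(v))\big)\,a^{-2\rho/p}K(a^{-1})$, whose cutoff is a translate of the fixed $L^1$ function $\varphi$; no $p'$ and no comparability argument are needed. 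Without these two ingredients (the three-term splitting of the cutoff and the weight flip) your application of Lemma \ref{lem Mel} is invalid and the proof does not close.
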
 
\begin{proof}
Using transference inequality \eqref{e tp1} we obtain
\begin{equation*}
	\| \tK_1\|_{\Cvp{\BS}}\leq \int_{\OBN}\left\|a^{2\rho/p}\tK_1(va)\right\|_{\Cvp{\BA}}\,dv
\end{equation*}
and
\begin{equation*} 
	\| \tK_2\|_{\Cvp{\BS}}\leq \int_{\OBN}\left\|a^{2\rho/p}\tK_2(va)\right\|_{\Cvp{\BA}}\,dv,
\end{equation*}
where $dv$ denotes the Haar measure on the unimodular group $\OBN.$ Then we write
\begin{align*}
	\| \tK_1\|_{\Cvp{\BS}}&\leq \int_{\OBN}P(v)^{2/p}\left\|\left(a\exp H(v)\right)^{2\rho/p}\tK_1(va)\right\|_{\Cvp{\BA}}\,dv,\\
	\| K_2\|_{\Cvp{\BS}}&\leq \int_{\OBN}P(v)^{2/p}\left\|\left(a\exp H(v)\right)^{2\rho/p}\tK_2(va)\right\|_{\Cvp{\BA}}\,dv,
\end{align*}
with $P(v)$ being defined in \eqref{e Pv}. Since $(1+|H(v)|)P(v)^{2/p}\in L^1(\BS)$ when $p<2,$ our task reduces to proving  the estimates 
\begin{equation}
		\label{e tK1 1}
 \left\|\left(a\exp H(v)\right)^{2\rho/p}\tK_1(va)\right\|_{\Cvp{\BA}}\lesssim (1+|\alpha H(v)|) \left\|\ka^p\right\|_{\Cvp{\BA}}
\end{equation}
and 
\begin{equation}
		\label{e tK2 1}
 \left\|\left(a\exp H(v)\right)^{2\rho/p}\tK_2(va)\right\|_{\Cvp{\BA}}\lesssim (1+|\alpha H(v)|) \left\|\ka^p\right\|_{\Cvp{\BA}}
\end{equation}
for $v\in \OBN.$

We justify \eqref{e tK1 1} first. By multiplication invariance and \eqref{e aK1} we have
\begin{align*}
	&\left\|\left(a\exp H(v)\right)^{2\rho/p}\tK_1(va)\right\|_{\Cvp{\BA}}=\left\|\chi\left(a\exp\bigg(-\frac12 H(v)\bigg)\right) a^{2\rho/p}K(a)\right\|_{\Cvp{\BA}}\\
	&\le \left\|a^{2\rho/p}K(a)\right\|_{\Cvp{\BA}}+\left\|\left(1-\chi\left(a\exp\bigg(-\frac12 H(v)\bigg)\right)\right) a^{2\rho/p}K(a)\right\|_{\Cvp{\BA}}.
\end{align*}
Further, denoting 
\begin{equation}
	\label{e fipsi}
\varphi(a)=(1-\chi(a))a^{4\rho/p}\qquad \textrm{and}\qquad \psi_v(a)=-\chi\left(a\exp\bigg(-\frac12 H(v)\bigg)\right)+\chi(a),
\end{equation}
we obtain
\begin{align*}
	&\left\|\left(a\exp H(v)\right)^{2\rho/p}\tK_1(va)\right\|_{\Cvp{\BA}}\\
	&\le \left\|a^{2\rho/p}K(a)\right\|_{\Cvp{\BA}}+\left\|\varphi(a)a^{-2\rho/p}K(a)\right\|_{\Cvp{\BA}}+\left\|\psi_v(a)a^{2\rho/p}K(a)\right\|_{\Cvp{\BA}}.
\end{align*}
Since $K$ is even we have $\|a^{-2\rho/p}K(a)\|_{\Cvp{\BA}}=\|\ka^p\|_{\Cvp{\BA}}.$ Thus, by Lemma \ref{lem Mel} in order to justify \eqref{e tK1 1} it suffices to show that 
\begin{equation}
	\label{e vps}
\|\varphi\|_{L^1(\BA)}+\|D^2\varphi\|_{L^1(\BA)}\lesssim 1\qquad\textrm{and}\qquad \|\psi_v\|_{L^1(\BA)}+\|D^2\psi_v\|_{L^1(\BA)}\lesssim |\alpha H(v)|.
\end{equation}  

The first inequality in \eqref{e vps} is obvious once we note that $\varphi$ is a smooth function supported on the set of small $a.$ More precisely, $\varphi$ vanishes unless $a^{\alpha}\le e^2.$  To prove the second inequality we note that $\psi_v(\exp(tH_0))$ vanishes unless $-2\le t\le 2+2^{-1}\alpha(H(v)).$ Moreover, $|\psi_v(a)|\le 2$ and  $|D^2 \psi_v(a)|\lesssim 1,$ so that integrating in $a=\exp(tH_0)$ we obtain
 \[
  \|\psi_v\|_{L^1(\BA)}+\|D^2\psi_v\|_{L^1(\BA)}\lesssim \int_{-2}^{2+2^{-1}\alpha(H(v))}\,dt \lesssim (1+|\alpha(H(v))|),
 \]
 as desired.

 Now we move to the proof of \eqref{e tK2 1}. Using \eqref{e aK2} and multiplication invariance we have
 \begin{align*}
 	&\left\|\left(a\exp H(v)\right)^{2\rho/p}\tK_2(va)\right\|_{\Cvp{\BA}}\\
 	&= \left\|\left(a^2\exp H(v)\right)^{2\rho/p}\left(1-\chi\left(a\exp\bigg(\frac12 H(v)\bigg)\right)\right)a^{-2\rho/p}K(a^{-1})\right\|_{\Cvp{\BA}}\\
 	&=\left\|\varphi\left(a\exp\bigg(\frac12 H(v)\bigg)\right)a^{-2\rho/p}K(a^{-1})\right\|_{\Cvp{\BA}},
 \end{align*}
where $\varphi$ was defined in \eqref{e fipsi}. Since the  $L^1(\BA)$ norms of $$\varphi\bigg(a \exp\bigg(\frac12 H(v)\bigg)\bigg)\qquad \textrm{ and }\qquad   D^2\bigg[\varphi\bigg(\cdot \exp\bigg(\frac12 H(v)\bigg)\bigg)\bigg](a)$$ are independent of $v\in \OBN$, applying Lemma \ref{lem Mel} and \eqref{e vps} we reach \eqref{e tK2 1}. This completes the proof of the proposition.
\end{proof}

We finish this section with a result that allows us to control the operators corresponding to the differences $K_j-\tK_j,$ $j=1,2.$ Here Lemma \ref{lem:CtoI} will be crucial.  For a function $\kappa\colon\BA\to \C$ we define the Lipschitz norm
\begin{equation}
	\label{e lip}
	\|\kappa\|_{\Lip}=\sup_{a,b,\in \BA}\frac{|\kappa(a)-\kappa(b)|}{|\alpha(\log a) -\alpha(\log b)|}.
\end{equation}

\begin{pro}
	\label{pro diff trans}
	Let $p\in(1,2)$. Take $q\in [p,2]$ such that $|\rho|(1/p-1/q)<1$ and assume that $\|\kappa^q\|_{\Lip}<\infty.$ Then we have
	\begin{equation}
		\label{e dK1}
		\| K_1-\tK_1\|_{\Cvp{\BS}}\le C_{p,q} \|\kappa^q\|_{\Lip}
	\end{equation} 
	and
	\begin{equation}
		\label{e dK2}
		\|K_2- \tK_2\|_{\Cvp{\BS}}\le C_{p,q}\|\kappa^q\|_{\Lip}.
	\end{equation} 
In particular, if $|\rho|<2,$ then \eqref{e dK1}, \eqref{e dK2} hold with $q=2.$ 
\end{pro}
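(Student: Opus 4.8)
The plan is, for $j=1,2$, to bound $\|K_j-\tK_j\|_{\Cvp{\BS}}$ by means of the crude transference inequality \eqref{e tp2}, and then to use Lemma \ref{lem:CtoI} to see that on the support of the cut-off defining $K_j$ the Cartan component $b=[va]_+$ is so close to $\ta=a\exp H(v)$ (for $j=1$), respectively to $a^{-1}$ (for $j=2$), that, once the weights are pulled out, the integrand is controlled by $\|\kappa^q\|_{\Lip}$ times a factor that is summable both in $a$ and in $v$. The condition $|\rho|(1/p-1/q)<1$ is exactly what makes the $a$-integral for $K_1-\tK_1$ converge, while the restriction $q\le 2$ (together with $p<2$) is what makes the $v$-integrals converge.

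For $K_1-\tK_1=\chi\!\left(a\exp\tfrac12 H(v)\right)\bigl[K(b)-K(\ta)\bigr]$ the inequality \eqref{e tp2} reduces matters to bounding $\int_{\OBN}\int_{\BA}\chi\!\left(a\exp\tfrac12 H(v)\right)a^{2\rho/p}|K(b)-K(\ta)|\,da\,dv$. On the support of the cut-off one has $a^{-\alpha}\lesssim(\ta)^{\alpha}$, hence by Lemma \ref{lem:CtoI} the term $(\ta)^{\alpha}$ is dominant and $b^{\alpha}\approx(\ta)^{\alpha}$; I would split this support according to whether $(\ta)^{\alpha}\ge e^{2}$ or $(\ta)^{\alpha}<e^{2}$. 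On the second, bounded piece one checks, using $\alpha(H(v))\ge 0$ and the lower bound $a^{\alpha}\gtrsim P(v)^{1/(2|\rho|)}$ forced by $\chi$, that $a^{\alpha}$, $(\ta)^{\alpha}$, $b^{\alpha}$ and $\alpha(H(v))$ all lie in fixed bounded ranges, so this piece is nonempty only for $v$ in the finite-measure set $\{\alpha(H(v))\le C\}$ (finiteness from \eqref{e Pv'}); since $\kappa^q$ vanishes for $\alpha(\log a)\in[-2,2]$ and is Lipschitz it is bounded there, so the contribution is $\lesssim\|\kappa^q\|_{\Lip}$. On the main piece $(\ta)^{\alpha}\ge e^{2}$ I would invoke item 3 of Lemma \ref{lem:CtoI} to get $b^{\alpha}\approx(\ta)^{\alpha}$ together with
\[
\bigl|\alpha(\log b)-\alpha(\log\ta)\bigr|=\Bigl|\log\tfrac{b^{\alpha}}{(\ta)^{\alpha}}\Bigr|\lesssim a^{-\alpha}(\ta)^{-\alpha}+(\ta)^{-2\alpha}\lesssim a^{-2\alpha}P(v)^{1/|\rho|},
\]
then write $K(a)=a^{-2\rho/q}\kappa^q(a)$ (legitimate since $K$ is even) and combine the Lipschitz bound with $|\kappa^q(\ta)|\lesssim\|\kappa^q\|_{\Lip}(1+|\alpha(\log\ta)|)$, with $|b^{-2\rho/q}-(\ta)^{-2\rho/q}|\lesssim(\ta)^{-2\rho/q}|\alpha(\log b)-\alpha(\log\ta)|$, and with $a^{2\rho/p}(\ta)^{-2\rho/q}=(a^{\alpha})^{2|\rho|(1/p-1/q)}P(v)^{2/q}$ to reach, on this piece,
\[
a^{2\rho/p}\bigl|K(b)-K(\ta)\bigr|\lesssim\|\kappa^q\|_{\Lip}\,(a^{\alpha})^{2|\rho|(1/p-1/q)-2}\,P(v)^{2/q+1/|\rho|}\bigl(1+|\alpha(\log\ta)|\bigr).
\]
The exponent $2|\rho|(1/p-1/q)-2$ is negative exactly by hypothesis, so the $a$-integral over the support $a^{\alpha}\gtrsim P(v)^{1/(2|\rho|)}$ converges, and collecting the powers of $P(v)$ leaves $\lesssim C_{p,q}\|\kappa^q\|_{\Lip}P(v)^{1/p+1/q}\bigl(1+\alpha(H(v))\bigr)$; since $p<2\le q$ one has $1/p+1/q>1$, so \eqref{e Pv'} makes this summable over $\OBN$, which yields \eqref{e dK1}.

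For $K_2-\tK_2=\bigl(1-\chi(a\exp\tfrac12 H(v))\bigr)\bigl[K(b)-K(a^{-1})\bigr]$ I would argue symmetrically: on the support of $1-\chi(a\exp\tfrac12 H(v))$ one has $(\ta)^{\alpha}\lesssim a^{-\alpha}$, so $a^{-\alpha}$ is dominant, item 4 of Lemma \ref{lem:CtoI} gives $b^{\alpha}\approx a^{-\alpha}$ with $|\alpha(\log b)+\alpha(\log a)|\lesssim a^{2\alpha}P(v)^{-1/|\rho|}$, and the same computation --- now with $a^{2\rho/p}(a^{-1})^{-2\rho/q}=(a^{\alpha})^{2|\rho|(1/p+1/q)}$ --- produces on the main piece $a^{-\alpha}\ge e^2$ the bound $a^{2\rho/p}|K(b)-K(a^{-1})|\lesssim\|\kappa^q\|_{\Lip}(a^{\alpha})^{2|\rho|(1/p+1/q)+2}P(v)^{-1/|\rho|}(1+|\alpha(\log a)|)$. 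Here the exponent of $a^{\alpha}$ is positive, so the $a$-integral over $a^{\alpha}\lesssim P(v)^{1/(2|\rho|)}$ converges at the origin, and once more leaves $\lesssim C_{p,q}\|\kappa^q\|_{\Lip}P(v)^{1/p+1/q}(1+\alpha(H(v)))$, summable over $\OBN$; the bounded piece $a^{-\alpha}<e^{2}$ is treated exactly as before. This gives \eqref{e dK2}. Finally, if $|\rho|<2$ then $q=2\in[p,2]$ satisfies $|\rho|(1/p-1/2)<|\rho|/2<1$, so the last assertion follows.

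I expect the delicate part to be the bookkeeping of the weights $a^{2\rho/p}$, $(\ta)^{-2\rho/q}$ and $P(v)$ on the main pieces: one must verify that the power of $a^{\alpha}$ in the integrand is $2|\rho|(1/p-1/q)-2$ for $K_1$ (negative precisely when $|\rho|(1/p-1/q)<1$) and $2|\rho|(1/p+1/q)+2$ for $K_2$ (always positive), and that after the $a$-integration the surviving power of $P(v)$ is $1/p+1/q$, whose excess over $1$ rests on $q\le 2$ and $p<2$. The quantitative comparison of $\alpha(\log b)$ with $\alpha(\log\ta)$ --- which is what is new in items 3--4 of Lemma \ref{lem:CtoI} relative to \eqref{f: decom Iwas}, and what brings $\|\kappa^q\|_{\Lip}$ into play --- together with the routine analysis of the bounded transition pieces are the remaining points needing care.
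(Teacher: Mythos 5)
Your argument is correct and follows essentially the same route as the paper: transference via \eqref{e tp2}, items 3) and 4) of Lemma \ref{lem:CtoI} to compare $[va]_+$ with $a\exp(H(v))$ (resp.\ $a^{-1}$), the two-term Lipschitz decomposition of $K(b)-K(\ta)$, and the verification that the surviving power of $P(v)$ is $1/p+1/q>1$. The only difference is bookkeeping: by integrating in $a$ over the exact support $a^{\alpha}\gtrsim P(v)^{1/(2|\rho|)}$ while retaining the full factor $a^{-2\alpha}P(v)^{1/|\rho|}$, you avoid the paper's further split into $a^{\alpha}\ge 2$ and $a^{\alpha}<2$ and its separate $\varepsilon$-argument for the endpoint case $q=2$.
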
 
\begin{proof}
Clearly, if $|\rho|<2$ then $|\rho|(1/p-1/2)<1$ for all $p\in(1,2).$ Thus, in this case \eqref{e dK1} and \eqref{e dK2} hold with $q=2.$ 

Note that since $\ka^q$ is Lipschitz and $\ka^q(\id)=0$ we have
\begin{equation}
	\label{e kqinf}|\ka^q(a)|\leq \|\ka^q\|_{\Lip}|\alpha(\log a)|,\qquad a\in \BA.
\end{equation}
Since $K$ is even we also  have $$a^{-2\rho/q}\ka^q(a)=K(a)=K(a^{-1})=a^{2\rho/q}\ka^q(a^{-1}),$$ so that
\begin{equation}
	\label{e Kkaq}
	|K(a)|\lesssim \|\ka^q\|_{\Lip}\qquad \textrm{and}\qquad \|K\|_{\Lip}\lesssim  \|\ka^q\|_{\Lip};
\end{equation}  
in the second inequality above we also used the fact that $K$ is supported away from the identity element $e_{\BA}.$ 

 For $v\in \OBN$ and $a\in \BA$ we denote \[
b=[va]_+,\qquad \ta=a\exp\big( H(v)\big),\qquad \tc=a\exp\big(2^{-1} H(v)\big).\]
Note that then we have $$aa_v=\tc^{2},$$ so that $$a^{\alpha}\le \tc^{\alpha}\le \ta^{\alpha}.$$

{\bf We start with the proof of \eqref{e dK1}}. Here the kernel we need to estimate is
\begin{align*}
	K_1(va)-\tK_1(va)=\chi(\tc)\left[b^{-2\rho/q}\ka^q(b)-\ta^{-2\rho/q}\ka^q(\ta)\right].
\end{align*}  
By the transference inequality \ref{e tp2} it is to show that
\begin{equation}
	\label{e K1-tK1 goal}
	\int_{\OBN}\int_{\BA} a^{2\rho/p}|(K_1-\tK_1)(va)|\,da\,dv \lesssim\|\ka^q\|_{\Lip}.
\end{equation}
If $\chi(\tc)\neq 0,$ then we have $\tc^{\alpha}\ge e^{-2}.$ Thus, if $(\ta)^{\alpha}<2$ then  $va$ belongs to the set  
\[\{va\in \BS \colon  e^{-2}\exp(-2^{-1}\alpha H(v))\le a^{\alpha}\le 2 \exp(-\alpha H(v))\}.\] 
This set is included in
\[
E_1:=\{va\in \BS\colon \exp(2^{-1}\alpha H(v))\le 2 e^2,\quad 2^{-1}e^{-2}\le a^{\alpha}\le 2\},
\] 
which is compact, hence, of finite measure in $\BS.$    
Using \eqref{e kqinf} we thus see that
	$|K_1(va)-\tK_1(va)|\Ind{\ta^{\alpha}<2}\lesssim  \|\ka^q\|_{\Lip}\Ind{E_1}(va),$ hence, also 
	\[
	\int_{\OBN}\int_{\BA} a^{2\rho/p}|K_1(va)-\tK_1(va)|\Ind{\ta^{\alpha}<2}\,da\,dv\lesssim \|\ka^q\|_{\Lip}.
	\] 
	
Thus, we are left with estimating $|K_1(va)-\tK_1(va)|\Ind{\ta^{\alpha}\ge 2}.$ 
Using Lemma \ref{lem:CtoI} item 3) we see that
\begin{equation*}
-(\ta)^{-2\alpha}\lesssim\log(1-2(\ta)^{-2\alpha})\le \alpha(\log b) -\alpha(\log \ta) \le \log (1+\tc^{-2\alpha})\le \tc^{-2\alpha},
\end{equation*}
so that
\begin{equation}
	\label{e bta}
	| \alpha(\log b) -\alpha(\log \ta)|\lesssim (\ta)^{-2\alpha}+(\tc)^{-2\alpha}\lesssim (\tc)^{-2\alpha}.
\end{equation}
Since $b^{\alpha}\ta^{-\alpha}=\exp( \alpha(\log b) -\alpha(\log \ta))$ the above inequality implies that
 \begin{equation}
 	\label{e btar}
 1-C\tc^{-2\alpha}\le  \frac{b^{2\rho/q}}{\ta^{{2\rho/q}}}\le 1+C\tc^{-2\alpha},
 \end{equation}  
where $C$ is a constant depending only on $|\rho|$ and $q.$

Thus, decomposing 
\begin{align*}
	&K_1(va)-\tK_1(va)=\chi(\tc)\left[b^{-2\rho/q}\ka^q(b)-\ta^{-2\rho/q}\ka^q(\ta)\right]\\
	&=\chi(\tc) b^{-2\rho/q}\left[\ka^q(b)-\ka^q(\ta)\right]+\chi(\tc)b^{-2\rho/q}\ka^q(\ta)(1-b^{2\rho/q}\ta^{-2\rho/q})
\end{align*}   
and using the definition \eqref{e lip} together with \eqref{e kqinf},  \eqref{e bta}, and \eqref{e btar}, we estimate
\begin{equation}
	\label{e K1-tK1}
	a^{2\rho/p}|K_1(va)-\tK_1(va)|\Ind{\ta^{\alpha}\ge 2}\le a^{2\rho/p}\|\kappa^q\|_{\Lip}\cdot \chi(\tc)  b^{-2\rho/q}\tc^{-2\alpha}(1+|\alpha(\log \ta)|)
\end{equation}
and further
\begin{equation}
		\label{e K1-tK1'}
		\begin{split}
		&a^{2\rho/p}|K_1(va)-\tK_1(va)|\Ind{\ta^{\alpha}\ge 2}\\
		&\lesssim \|\kappa^q\|_{\Lip}\cdot \chi(\tc)  a^{2\rho(1/p-1/q)-2\alpha}\exp((-2\rho/q-\alpha)H(v))(1+|\alpha(\log a)|+|\alpha(H(v))|).
		\end{split}
\end{equation}
By our assumption $2|\rho|(1/p-1/q)-2<0$ we see that
\[
\int_{a\colon a^{\alpha}\ge 2} (1+|\alpha(\log a)|)a^{2\rho(1/p-1/q)-2\alpha}\,da<\infty.
\]
Hence, using \eqref{e K1-tK1'} in view of \eqref{e Pv'} we obtain
\begin{align*}
	&\int_{\OBN}\int_{\BA}a^{2\rho/p}|K_1(va)-\tK_1(va)|\Ind{a^{\alpha}\ge 2}\,dv\,da \\
	&\le \|\kappa^q\|_{\Lip} \int_{\OBN} (1+|\alpha(H(v))|)\exp((-2\rho/q-\alpha)H(v))\,dv \lesssim  \|\kappa^q\|_{\Lip}.
\end{align*}

In order to prove \eqref{e K1-tK1 goal} it remains to control  \[\int_{\OBN}\int_{ \BA}a^{2\rho/p}|K_1(va)-\tK_1(va)|\Ind{\ta^{\alpha}\ge 2}\Ind{a^{\alpha}<2}\,da\,dv.\] Assume first that $q<2.$  We use \eqref{e K1-tK1}  and estimate $\tc^{-2\alpha}\lesssim 1$ which, together with \eqref{e Pv'},  leads to   
\begin{align*}
	&\int_{\OBN}\int_{\BA}a^{2\rho/p}|K_1(va)-\tK_1(va)|\Ind{\ta^{\alpha}\ge 2}\Ind{a^{\alpha}<2}\,dv\,da\\
	& \lesssim \|\kappa^q\|_{\Lip} \int_{a\colon a^{\alpha}<2}(1+|\alpha(\log a)|)a^{2\rho(1/p-1/q)}\,da\int_{\OBN} (1+|\alpha(H(v))|)\exp((-2\rho/q)H(v))\,dv \\
	&\lesssim  \|\kappa^q\|_{\Lip}.
\end{align*}
We are left with the case $q=2.$ Take $\varepsilon\in(0,1)$ such that $2|\rho|(1/p-1/2)>\varepsilon.$ Then we have
\[
\int_{a\colon a^{\alpha}< 2} (1+|\alpha(\log a)|)a^{2\rho(1/p-1/2)-\varepsilon\alpha}\,da<\infty.
\]
Note that for $1\lesssim \tc^{\alpha}$ we have $\tc^{-2\alpha}\lesssim \tc^{-\varepsilon \alpha}$. Hence, using again  \eqref{e K1-tK1}
 together with \eqref{e btar}  and \eqref{e Pv'} we obtain
\begin{align*}
	&\int_{\OBN}\int_{\BA}a^{2\rho/p}|K_1(va)-\tK_1(va)|\Ind{\ta^{\alpha}\ge 2}\Ind{a^{\alpha}<2}\,dv\,da \\
	&\lesssim\|\kappa^2\|_{\Lip} \int_{a\colon a^{\alpha}< 2} (1+|\alpha(\log a)|)a^{2\rho(1/p-1/2)-\varepsilon\alpha}\,da \\
	&\hspace{1.5cm}\times  \int_{\OBN} (1+|\alpha(H(v))|)\exp((-\rho-2^{-1}\varepsilon\alpha)H(v))\,dv \\
	&\lesssim \|\kappa^2\|_{\Lip}. 
\end{align*}

This completes the proof \eqref{e K1-tK1 goal} and thus also the proof of \eqref{e dK1}.

{\bf Now we move to the proof of \eqref{e dK2}}. Here the kernel we need to estimate is
\begin{align*}
	K_2(va)-\tK_2(va)=(1-\chi(\tc))\left[b^{-2\rho/q}\ka^q(b)-a^{2\rho/q}\ka^q(a^{-1})\right].
\end{align*}  
The argumentation is analogous to the proof of \eqref{e dK1}. The main difference is the use of Lemma \ref{lem:CtoI} item 4) instead of item 3) in a relevant place. We present the argument for the sake of completeness. 

By the transference inequality \ref{e tp2} it is enough to show that
\begin{equation}
	\label{e K2-tK2 goal}
	\int_{\OBN}\int_{\BA} a^{2\rho/p}|(K_2-\tK_2)(va)|\,da\,dv \lesssim\|\ka^q\|_{\Lip}.
\end{equation}
If $1-\chi(\tc)\neq 0,$ then $\tc^{\alpha}\le e^{2}.$ Thus, if $a^{-\alpha}<2$ then $va$ belongs to the compact set  
\[E_2:=\{va\in \BS \colon \exp(2^{-1}\alpha H(v))\le 2 e^2,\quad  2^{-1}\le a^{\alpha}\le e^2 \}.\]  
Using \eqref{e kqinf} and \eqref{e Kkaq} we thus see that
$|K_2(va)-\tK_2(va)|\Ind{a^{-\alpha}<2}\lesssim  \|\ka^q\|_{\Lip}\Ind{E}(va),$ hence, also 
\[
	\int_{\OBN}\int_{\BA} a^{2\rho/p}|K_1(va)-\tK_1(va)|\Ind{a^{-\alpha}<2}\,da \,dv\lesssim \|\ka^q\|_{\Lip}.
\] 
Thus, we are left with estimating \[	\int_{\OBN}\int_{\BA} a^{2\rho/p}|K_1(va)-\tK_1(va)|\Ind{a^{-\alpha}\ge 2}\,da \,dv.\] 
Using Lemma \ref{lem:CtoI} item 4) we see that
\begin{equation*}
	-a^{2\alpha}\lesssim\log(1-2a^{2\alpha})\le \alpha(\log b) -\alpha(\log a^{-1}) \le \log (1+\tc^{2\alpha})\le \tc^{2\alpha},
\end{equation*}
so that
\begin{equation}
	\label{e bta'}
	| \alpha(\log b) -\alpha(\log a^{-1})|\lesssim a^{2\alpha}+(\tc)^{2\alpha}\lesssim (\tc)^{2\alpha}.
\end{equation}
In analogy with \eqref{e btar} Lemma \ref{lem:CtoI} item 4) also gives
\begin{equation}
	\label{e btar'}
	1-C(\tc)^{2\alpha}\le  \frac{b^{2\rho/q}}{a^{-2\rho/q}}\le 1+C(\tc)^{2\alpha},
\end{equation}  
where $C$ is a constant depending only on $|\rho|$ and $q.$

Thus, decomposing 
\begin{align*}
	&K_2(va)-\tK_2(va)=(1-\chi(\tc))\left[b^{-2\rho/q}\ka^q(b)-a^{2\rho/q}\ka^q(a^{-1})\right]\\
	&=(1-\chi(\tc)) b^{-2\rho/q}\left[\ka^q(b)-\ka^q(a^{-1})\right]+(1-\chi(\tc))b^{-2\rho/q}\ka^q(a^{-1})(1-b^{2\rho/q}a^{2\rho/q})
\end{align*}   
and using \eqref{e kqinf} and \eqref{e bta'},\eqref{e btar'} we estimate
\begin{equation}
		\begin{split}
\label{e K2-tK2} &a^{2\rho/p}|K_2(va)-\tK_2(va)|\Ind{a^{-\alpha}\ge 2}\\
&\lesssim a^{2\rho/p}\|\kappa^q\|_{\Lip}\cdot (1-\chi(\tc))  b^{-2\rho/q}\tc^{2\alpha}(1+|\alpha(\log a^{-1})|)\\
  &\lesssim \|\kappa^q\|_{\Lip}\cdot (1- \chi(\tc))  a^{2\rho(1/p+1/q)+2\alpha}\exp(\alpha H(v))(1+|\alpha(\log a)|).
 \end{split}
\end{equation}
By our assumptions $1/p+1/q >1,$ so that there exists $\varepsilon\in (0,1/2)$ such that 
\begin{equation}
	\label{e rhopq}
	|\rho|(1/p+1/q)-\varepsilon >|\rho|(1+\varepsilon),
	\end{equation}
	  Since $(1-\chi(\tc))$ implies $\tc^{\alpha}\le e^{2}$ using \eqref{e K2-tK2} we obtain
\begin{align*}
	&\int_{a\colon a^{-\alpha}\ge 2} a^{2\rho/p}|K_2(va)-\tK_2(va)|\,da\\
	&
\lesssim_{\varepsilon} \|\kappa^q\|_{\Lip}\exp(\alpha H(v))\int_{a\colon a^{\alpha}\le e^2 \exp(-2^{-1}\alpha H(v))} a^{2\rho(1/p+1/q)+(2-2\varepsilon)\alpha}\,da\\
&\lesssim\|\kappa^q\|_{\Lip}\exp\left(-(\rho(1/p+1/q)-\varepsilon \alpha) H(v)\right) 
\end{align*}
Recalling \eqref{e rhopq} and using \eqref{e Pv'} we see that $\exp\left(-(\rho(1/p+1/q)-\varepsilon \alpha) H(v)\right) $ is in $L^1(\OBN).$ Therefore, we reach

\[
\int_{\OBN}\int_{\BA} a^{2\rho/p}|K_1(va)-\tK_1(va)|\Ind{a^{-\alpha}\ge 2}\,da \,dv\lesssim \|\ka^q\|_{\Lip}.
\] 

This completes the proof of \eqref{e K2-tK2 goal} and thus also the proof of \eqref{e dK2}.  The proof of Proposition \ref{pro diff trans} is also finished.

\end{proof}
\section{Proofs of our main results - Theorems \ref{t: main} and \ref{t: main i}}

	\label{s pf mult}
	
	In proving these theorems we will use the inversion formula \eqref{f: inversion infinity} which implies that
	\[
	K(a)={ 2} (1-\eta(a))\int_{\fra^*} a^{i\la-\rho}\, \big[1+\om(\la,a) \big] \, \mbc (\la)  \wrt \la 
	\qquad  a \in \BA^+;
	\] 
	recall that $\eta$ is a smooth cut-off function supported near the identity element $e_{\BA}.$  
	Denote $\ap=a$ if $a\in \BA_+$ and $\ap=a^{-1}$ if $a\in \BA^-.$ Since $K$ and $\eta$ are even we thus see that for $a\in \BA$   
	\begin{equation}
		\label{e kaqpinv0} 
		\kappa^q(a)= 
		2a^{2\rho/q}(1-\eta(\ap))\int_{\fra^*} \ap^{i\la-\rho}\, \big[1+\om(\la,\ap) \big] \, \mbc (\la)  \wrt \la.
	\end{equation}
	where $\ka^q$ is given for $q\in[p,2]$ by \eqref{e Kq}.  Using  \eqref{e kaqpinv0} and the change of variable $\la \to \la+i\roq$ for $a\in \BA^+$ we also see that
	\begin{equation}
		\label{e kaqpinv} 
		\kappa^q(a)=2(1-\eta(a))\int_{\fra^*} a^{i\la}\, \big[1+\om(\la+i\roq,a) \big] \, \mbc (\la+i\roq)  \wrt \la.
	\end{equation}
	
	Recall that $\om$ was defined by \eqref{e ome} as
	\begin{equation}
		\label{e ome'}
		\om(\la,a) = \sum_{\ell=1}^{\infty}\, \Ga_{\ell}(\la)\, a^{-2\ell\al}
	\end{equation}
	and denote $\Gamma_0\equiv 1.$ Inequality \eqref{eq:GamEst} (see also the proof of \cite[Lemma 3.3]{GMM}) implies that for all $0\le \de \le 1$ there is a constant $d>0$  such that
	\[
	|\la \partial_{\la}^{j} \Gamma_{\ell}(\la+i\de|\rho|)|\lesssim \ell^d,\qquad j=0,1,\qquad \ell=0,1,2,...
	\]
	uniformly in $\la \in \R.$ By the Mikhlin-H\"ormander multiplier theorem we thus see that
	\begin{equation}
		\label{e GjMpnorm}
		\|\Gamma_{\ell}(\cdot+i \de|\rho|)\|_{\Mp{\BA}}\lesssim \ell^d,
	\end{equation}
	for each fixed $0\le \de \le 1.$

	For further reference we define the functions
	\begin{equation}
		\label{e epm}
		\begin{split}
		\eta_{+}(a)&:=2(1-\eta(a))\Ind{\BA^{+}}(a),\qquad \eta_{-}(a):=2(1-\eta(a^{-1}))a^{2\rho/p}\Ind{\BA^{-}}(a),\\
		\eta_{\ell, +}(a)&:=\eta_{+}(a)a^{-2\ell \alpha},\qquad \eta_{\ell, -}(a):=\eta_{-}(a)a^{2\ell \alpha},\qquad \ell=0,1,2,\ldots.
	\end{split}
\end{equation}
	Note that $\eta_{\pm}$ and $\eta_{\ell, \pm}$ are smooth functions on $\BA.$ Further, since $(1-\eta(\ap))$ vanishes if $|\alpha(\log a)|<2$  a short calculation shows that
	\begin{equation}
		\label{e epmles}
	\|\eta_{\ell, \pm}\|_{L^1(\BA)}+\|D^2 \eta_{\ell, \pm}\|_{L^1(\BA)}\lesssim e^{-\ell},\qquad \ell=0,1,2,... ,
	\end{equation}
so that 
	\begin{equation*}
	\|\eta_{ \pm}\|_{L^1(\BA)}+\|D^2 \eta_{ \pm}\|_{L^1(\BA)}\lesssim 1.
\end{equation*}
The above inequalities will be important for applications of Lemma \ref{lem Mel} later on.

Throughout the proof of Theorem \ref{t: main} and \ref{t: main i} we abbreviate
\begin{equation}
	\label{e tmbc}
	\tm=\check\bc^{-1}m_B.
\end{equation}

We start with the proof of Theorem \ref{t: main}.

\subsection{Proof of  Theorem \ref{t: main}}. By Proposition \ref{p: main loc} it suffices to estimate the global part $K=k_{\glo}$ under the assumptions of Theorem \ref{t: main}. In Section \ref{sec:glob} the kernel $K$ was split as 
\[
K=K_1+K_2=\tK_1+\tK_2+(K_1-\tK_1)+(K_2-\tK_2),
\]
where $K_1$ and $K_2$ are the splitting kernels \eqref{e K1}, \eqref{e K2}, while  $\tK_1$ and $\tK_2$ are the approximating kernels \eqref{e aK1}, \eqref{e aK2}. Hence, by Propositions \ref{pro approx trans} and \ref{pro diff trans} Theorem \ref{t: main} will be proved if we show the two propositions below. 
\begin{pro}
	\label{pro kap}
	Assume that $m_B$ satisfies assumption 1) of Theorem \ref{t: main}. 
	 Namely, let $m_B$ be a bounded Weyl-invariant holomorphic function in $T_p$ such that
$[\omega m_B](\cdot + i\rop) $ is an $L^p(\BA)$ Fourier multiplier. 
	Then we have $\|\ka^p\|_{\Cvp{\BA}}<\infty.$ 
\end{pro}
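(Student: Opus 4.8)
The strategy is to bound $\|\ka^p\|_{\Cvp{\BA}}$ by decomposing $\ka^p=\ka^p_++\ka^p_-$ with $\ka^p_\pm:=\ka^p\,\Ind{\BA^\pm}$ (so $\|\ka^p\|_{\Cvp{\BA}}\le\|\ka^p_+\|_{\Cvp{\BA}}+\|\ka^p_-\|_{\Cvp{\BA}}$), and writing each half as a rapidly convergent series of model kernels via the Harish-Chandra expansion. For the $\BA^+$ part I start from \eqref{e kaqpinv} with $q=p$, substitute $1+\om(\la+i\rop,a)=\sum_{\ell\ge 0}\Gamma_\ell(\la+i\rop)\,a^{-2\ell\al}$ with the convention $\Gamma_0\equiv1$, and interchange summation and integration. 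The interchange is legitimate on $\supp(1-\eta)\cap\BA^+=\{a:a^\al\ge e^2\}$: there $a^{-2\ell\al}\le e^{-4\ell}$, while $|\Gamma_\ell(\la+i\rop)|$ grows only polynomially in $\ell$ and uniformly in $\la\in\R$ by \eqref{eq:GamEst}, and the $\la$-integral converges absolutely after the $\vep$-regularisation of Section~\ref{s: Background}. One obtains
\[
\ka^p_+=\sum_{\ell\ge 0}\eta_{\ell,+}\cdot\mF\big[\Gamma_\ell(\cdot+i\rop)\,\mbc(\cdot+i\rop)\big],
\]
with $\eta_{\ell,+}$ as in \eqref{e epm} and $\mF$ the inverse transform \eqref{e ft}.

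Next I would estimate each summand by Lemma~\ref{lem Mel}, which gives
\[
\big\|\eta_{\ell,+}\cdot\mF[\Gamma_\ell(\cdot+i\rop)\mbc(\cdot+i\rop)]\big\|_{\Cvp{\BA}}\lesssim\big(\|\eta_{\ell,+}\|_{L^1(\BA)}+\|D^2\eta_{\ell,+}\|_{L^1(\BA)}\big)\,\big\|\Gamma_\ell(\cdot+i\rop)\,\mbc(\cdot+i\rop)\big\|_{\Mp{\BA}}.
\]
The first factor is $\lesssim e^{-\ell}$ by \eqref{e epmles}. For the multiplier norm I use that $\|\cdot\|_{\Mp{\BA}}$ is submultiplicative (Fourier multiplier operators compose), that $\|\Gamma_\ell(\cdot+i\rop)\|_{\Mp{\BA}}\lesssim\ell^d$ by \eqref{e GjMpnorm} applied with $\de=\de(p)\in(0,1)$ (recall $\rop=\de(p)|\rho|$), and that $\|\mbc(\cdot+i\rop)\|_{\Mp{\BA}}<\infty$; this last point is precisely the consequence, recorded after \eqref{f: HCest}, of the factorisation \eqref{e ocmdecom} together with assumption (1) of Theorem~\ref{t: main} and the fact that $[\omega(\la+i\rop)\bc(-\la-i\rop)]^{-1}$ satisfies a Mikhlin-H\"ormander condition in $\la$. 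Hence each summand is $\lesssim e^{-\ell}\,\ell^d\,\|\mbc(\cdot+i\rop)\|_{\Mp{\BA}}$, and summing the geometric-type series $\sum_{\ell\ge 0}e^{-\ell}\ell^d<\infty$ gives $\|\ka^p_+\|_{\Cvp{\BA}}\lesssim\|\mbc(\cdot+i\rop)\|_{\Mp{\BA}}<\infty$.

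For $\ka^p_-$ I would repeat the argument after exploiting that $K$ and $\eta$ are even: for $a\in\BA^-$ one has $\ka^p(a)=a^{2\rho/p}(1-\eta(a^{-1}))\,k_B(a^{-1})$, and inserting \eqref{f: inversion infinity}, the series \eqref{e ome'}, and the contour shift $\la\to\la+i\rop$ exhibits $\ka^p_-$ as an absolutely convergent sum of terms $E_\ell\cdot\big(a\mapsto\mF[\Gamma_\ell(\cdot+i\rop)\mbc(\cdot+i\rop)](a^{-1})\big)$, where each $E_\ell$ is a smooth function supported in $\{a:a^\al\le e^{-2}\}$ with $\|E_\ell\|_{L^1(\BA)}+\|D^2E_\ell\|_{L^1(\BA)}\lesssim e^{-\ell}$, exactly as in \eqref{e epmles}; indeed on $\BA^-$ the weights $a^{2\rho/p}$, $a^{-\rho}$ (coming from $k_B(a^{-1})$) and $a^{-2\ell\al}$ are all decaying, so these envelopes are rapidly decreasing for free. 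Since the reflection $\phi(a)\mapsto\phi(a^{-1})$ is an isometry of $\lp{\BA}$ and therefore leaves both $\|\cdot\|_{\Cvp{\BA}}$ and $\|\cdot\|_{\Mp{\BA}}$ unchanged, Lemma~\ref{lem Mel} applied to the reflected envelopes bounds each term of $\ka^p_-$ again by $\lesssim e^{-\ell}\ell^d\|\mbc(\cdot+i\rop)\|_{\Mp{\BA}}$, and summation finishes the proof.

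The only points I expect to require care, though all routine, are the justification of the term-by-term Harish-Chandra expansion and the interchange of summation with the $\la$-integral — controlled by the support restriction $|\al(\log a)|\ge 2$, the geometric factor $a^{\mp 2\ell\al}$, the polynomial-in-$\ell$ bounds \eqref{eq:GamEst} and \eqref{e GjMpnorm}, and the $\vep$-regularisation (with all constants independent of $\vep$) — together with the weight bookkeeping that produces the envelopes $E_\ell$ in the $\BA^-$ case. The single structural input that genuinely matters is that assumption (1) is imposed on the line $\Im\la=\rop$: this is exactly the contour onto which the integral is shifted and on which $\mbc(\cdot+i\rop)$ must be an $\lp{\BA}$ Fourier multiplier for Lemma~\ref{lem Mel} to apply. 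The argument would tolerate a larger strip but not a smaller one, which is the whole point of Theorem~\ref{t: main} in comparison with \cite{GMM}.
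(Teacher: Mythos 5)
Your proof is correct and follows essentially the same route as the paper's: reduce to $\mbc(\cdot+i\rop)$ via \eqref{e ocmdecom}, expand term by term in the Harish--Chandra series, and bound each term with Lemma~\ref{lem Mel}, \eqref{e epmles} and \eqref{e GjMpnorm}. The only (harmless) difference is on $\BA^-$: you shift the contour to $\Im\la=\rop$ and use the reflection isometry, whereas the paper keeps the unshifted formula \eqref{e kaqpinv0} there and controls $\tm(\cdot)$ on the real line via Lemma~\ref{lem:locm}; both variants yield the same bound.
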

\begin{pro}
	\label{pro kaq}
	Assume that $m_B$ satisfies assumption 2) of Theorem \ref{t: main}. Namely,
	 let $q\in [p,p']$ be such that $|\rho| |1/p-1/q|<1$ and
	\begin{equation}
		\label{e kaq asum}
	\bigmod{\partial^jm_B(\zeta)}
	\leq C\, \Big(\min \big[\mod{\zeta-i\roq},\mod{\zeta+i\roq}\big]\Big)^{-j}
	\qquad  j=0,\ldots, N,\quad \zeta \in \TWq. \end{equation}
	Then we have $\|\ka^q\|_{\Lip}<\infty.$ 
	\end{pro}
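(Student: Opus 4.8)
Since $k_B$ is a bounded smooth function (by the Gaussian regularization) and $1-\eta$ vanishes on a neighbourhood of $\id$, the kernel $\kappa^q=a^{2\rho/q}(1-\eta)\,k_B$ is smooth on $\BA$ and vanishes near $\id$; hence $\|\kappa^q\|_{\Lip}=\sup_{a\in\BA}\bigl|D\kappa^q(a)\bigr|$ with $D=a\partial_a$, and it suffices to bound $D\kappa^q$ uniformly. The plan is to start from the inversion formula \eqref{e kaqpinv0}, which for $a\in\BA^+$ becomes, after the contour shift to $\Im\la=\roq$ (legitimate by holomorphy of $\check\bc^{-1}$, of the Harish--Chandra remainder $\om(\cdot,a)$ and of $m_B$ in $T_q$, together with the Gaussian factor that forces decay as $\Re\la\to\pm\infty$),
\[
\kappa^q(a)=2\bigl(1-\eta(a)\bigr)\int_{\fra^*}a^{i\la}\,\bigl[1+\om(\la+i\roq,a)\bigr]\,\mbc(\la+i\roq)\wrt\la,\qquad a\in\BA^+,
\]
and for $a\in\BA^-$, by \eqref{e kaqpinv0} with $\ap=a^{-1}$, the same type of integral in $a^{-1}$ up to an extra weight $a^{2\rho/q+\rho}$ which is bounded (indeed decaying) on $\BA^-$. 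The two cases being parallel, I would present $a\in\BA^+$ in detail.

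Next I would record the amplitude estimates. Since $0\le\roq\le|\rho|$, \eqref{f: HCest} gives $\bigl|\partial_\la^j\check\bc^{-1}(\la+i\roq)\bigr|\lesssim(1+|\la|)^{(n-1)/2-j}$; assumption 2) of Theorem~\ref{t: main} together with boundedness of $m_B$ gives $\bigl|\partial_\la^j m_B(\la+i\roq)\bigr|\lesssim|\la|^{-j}$ for $|\la|\ge1$ while $m_B(\cdot+i\roq)$ is merely bounded on $|\la|\le1$, so by Leibniz $\bigl|\partial_\la^j\mbc(\la+i\roq)\bigr|\lesssim(1+|\la|)^{(n-1)/2-j}$ for $|\la|\ge1$. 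Because $\kappa^q$ is supported where $\alpha(\log a)\ge2\ge\tfrac12$, estimate \eqref{f: estIonescu} shows $\bigl|\partial_\la^j\om(\la+i\roq,a)\bigr|$ and $\bigl|\partial_\la^j(a\partial_a\om)(\la+i\roq,a)\bigr|$ are both $\lesssim(1+|\la|)^{-j}$, uniformly in $a$; this uses the bound on $\om$ as a single function, so no summation over the Harish--Chandra series is needed.

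I would then differentiate: $D\kappa^q=(D(1-\eta))\cdot I+(1-\eta)\cdot DI$, where $I(a)$ is the integral above. The first summand is supported on a fixed compact annulus and is controlled by $\sup|I|$. For $I$ and for $DI$ I would split $\fra^*=\{|\la|\le1\}\cup\{|\la|>1\}$: on $\{|\la|\le1\}$ all factors are bounded, giving an $O(1)$ contribution uniformly (note one never differentiates $m_B$ there, so the possible $|\la|^{-j}$ blow-up is irrelevant); on $\{|\la|>1\}$ I would integrate by parts $N$ times in $\la$, using $a^{i\la}=e^{i\la\alpha(\log a)}$ and $\alpha(\log a)\ge2$, so that each step contributes a factor $(\alpha(\log a))^{-1}\lesssim1$ and shifts a $\la$-derivative onto the amplitude. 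The amplitudes that occur are $[1+\om]\mbc$ (for $I$) and $(a\partial_a\om)\mbc$ (part of $DI$), of size $\lesssim(1+|\la|)^{(n-1)/2}$, and — when $D$ hits $a^{i\la}$ — the amplitude $\la\,[1+\om]\mbc$, of size $\lesssim(1+|\la|)^{(n+1)/2}$; their $N$-th $\la$-derivatives are integrable over $\{|\la|>1\}$ when $N>(n+1)/2$ in the first two cases and when $N>(n+3)/2$ in the last. Thus the hypothesis $N>(n+3)/2$ is exactly what this worst term demands. Assembling the pieces yields $\sup_a|D\kappa^q(a)|<\infty$ on $\BA^+$, and the identical argument on $\BA^-$ (with the bounded extra weight) finishes the proof; the hypothesis $|\rho|\,|1/p-1/q|<1$ is not used here, only later in Proposition~\ref{pro diff trans}.

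The step requiring the most care is entirely technical: making the integration-by-parts estimate uniform over all $a$ with $|\alpha(\log a)|\ge2$, including right at the edge $|\alpha(\log a)|=2$ of the support where the $D(1-\eta)$ term also contributes, and keeping the contour shift and the $\BA^-$ weight bookkeeping straight. None of this is a genuine obstacle, which is why the computation is described as standard.
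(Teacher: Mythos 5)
Your proposal is correct and follows essentially the same route as the paper: reduce to a uniform bound on $D\ka^q$, use the shifted inversion formula \eqref{e kaqpinv} on $\BA^+$ (and \eqref{e kaqpinv0} with the bounded, decaying weight $a^{\rho(2/q+1)}$ on $\BA^-$), integrate by parts $N$ times in $\la$, and invoke \eqref{f: HCest}, \eqref{f: estIonescu} and \eqref{e kaq asum}, with $N>(n+3)/2$ entering through exactly the worst amplitude you identify. The only real difference is your splitting at $|\la|=1$: the paper integrates by parts over all of $\fra^*$ and asserts a uniform $(1+|\la|)^{-3}$ bound on $\partial_\la^N\big([1+\om(\cdot+i\roq,a)]\tm(\cdot+i\roq)\big)$, whereas your localization correctly avoids differentiating $m_B$ near $\la=0$, where \eqref{e kaq asum} only yields $|\partial_\la^N m_B(\la+i\roq)|\lesssim|\la|^{-N}$ — a sensible refinement; just account for the (harmless, uniformly bounded) boundary terms at $\la=\pm1$ produced by integrating by parts on $\{|\la|>1\}$ only, or replace the sharp cut by a smooth one.
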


\begin{proof}[Proof of Proposition \ref{pro kap}]
Using the decomposition \eqref{e ocmdecom} we see that it suffices to prove that
\begin{equation}
	\label{e kpmc}
	\|\ka^p\|_{\Cvp{\BA}}\lesssim \| \tm (\cdot + i\rop) \|_{\Mp{\BA}}.
\end{equation}
Then, using  \eqref{e kaqpinv} (for $a\in \BA^+$) and \eqref{e kaqpinv0} (for $a\in \BA^-$) together with \eqref{e ome'} we obtain the decomposition \begin{equation}
	\label{e kpdec}
	\ka^p=\sum_{l=0}^{\infty}\ka_{\ell}^{\pm}\end{equation} where, for $a\in \BA$ we define 
\begin{align*}
\ka_{\ell}^{+}(a)&= \eta_{\ell, +}\int_{\fra^*} a^{i\la}\,  \Gamma_{\ell}(\la+i\rop) \, \tm(\la+i\rop)  \wrt \la\\
\ka_{\ell}^{-}(a)&= \eta_{\ell, -}\int_{\fra^*} a^{-i\la}\,  \Gamma_{\ell}(\la) \, \tm(\la)  \wrt \la
\end{align*}

Therefore, using Lemma \ref{lem Mel}, \eqref{e epmles}, and \eqref{e GjMpnorm} we reach
\[
\|\ka_{\ell}^{\pm}\|_{\Cvp{\BA}}\lesssim \ell^d e^{-\ell}\|\tm(\cdot +i\rop)\|_{\Mp{\BA}},\qquad \ell=0,1,2,\ldots.
\]
Summing the above estimate over $\ell$  and recalling \eqref{e kpdec} we obtain \eqref{e kpmc}. This completes the proof of Proposition \ref{pro kap}.

\end{proof}

\begin{proof}[Proof of Proposition \ref{pro kaq}]

By the mean value theorem, to prove that $\|\ka^q\|_{\Lip}<\infty$ it suffices to show that $D \ka^q(a)$ is a bounded function on $\BA$; recall that $D=a\partial_a$ is the multiplicative derivative on $\BA.$ Since $\ka^q$ is supported away from $e_{\BA}$ it is enough to estimate $D \ka^q(a)$ separately for $a\in \BA^+$ and $a\in \BA^-.$  

We start with $a\in \BA^+$ in which case we utilize \eqref{e kaqpinv}. To compute the $D$ derivative we first integrate by parts $N$ times in $\la$ and then use Leibniz rule 
\begin{equation}
\label{e kaqform} 
\begin{split}&D\ka^q(a)\\
	=&2(-i)^N D\left(\frac{(1-\eta(a))}{(\log a)^N}\right)\int_{\fra^*} a^{i\la}\,  \partial_{\la}^N\left(\big[1+\om(\la+i\roq,a) \big] \, \tm (\la+i\roq )\right)   \wrt \la\\
&+2(-i)^N\frac{(1-\eta(a))}{(\log a)^N} \int_{\fra^*} a^{i\la}\, i\la\,\partial_{\la}^N\left(\big[1+\om(\la+i\roq,a) \big] \, \tm (\la+i\roq)\right)\\
&+ 2(-i)^N\frac{(1-\eta(a))}{(\log a)^N}\int_{\fra^*} a^{i\la}\,D\partial_{\la}^N\left(\big[1+\om(\la+i\roq,a) \big] \, \tm (\la+i\roq)\right).
\end{split}
\end{equation}
At this point we use our assumption \eqref{e kaq asum}. Since $N\ge (n+3)/2$ a calculation using \eqref{f: HCest}, \eqref{f: estIonescu} and \eqref{e kaq asum} gives
\[
| D^j\partial_{\la}^N\left(\big[1+\om(\la+i\roq,\ap) \big] \, \tm (\la+i\roq)\right)|\lesssim (1+|\la|)^{-3},\qquad j=0,1.
\]
Since $(1-\eta(a))(\log \ap)^{-N}$ is a smooth function equal to $(\log \ap)^{-N}$ for $|\alpha(\log a)|\ge 4$ coming back to  \eqref{e kaqform} we obtain 
\[
|D\ka^q(a)|\lesssim \int_{\fra^*} (1+|\la|)^{-2} \wrt \la\lesssim 1,\qquad a\in \BA^+.
\]

It remains estimate  $D\ka^q(a)$ when $a\in \BA^-.$ Here we use \eqref{e kaqpinv0} and again integrate by parts $N$ times in $\la.$ This gives, for $a\in \BA^-,$ 
\begin{align*}
	\kappa^q(a)=2i^Na^{\rho(2/q+1)}\frac{(1-\eta(a^{-1}))}{(\log a)^N}\int_{\fra^*} a^{-i\la}\, \partial_{\la}^N\left(\big[1+\om(\la,a^{-1}) \big] \, \tm (\la)\right)  \wrt \la.
\end{align*}
Using the above formula we calculate $D\kappa^q(a)$ with the aid of Leibniz rule and proceed similarly as we did when estimating  \eqref{e kaqform}. Here we need the assumption \eqref{e kaq asum} together with \eqref{f: HCest} and \eqref{f: estIonescu} (for $\Ima \la=0$). At this place it is important that $a^{\rho(2/q+1)}(1-\eta(a^{-1}))(\log a)^{-N}$ and its $D$ derivative are smooth functions that vanish for a close to the identity element $e_{\BA}.$   Then a calculation gives
\[
|D\ka^q(a)|\lesssim \int_{\fra^*} (1+|\la|)^{-2} \wrt \la\lesssim 1,\qquad a\in \BA^-.
\]

Thus we completed the proof of Proposition \ref{pro kaq}.

\end{proof}

Having completed the proof of Theorem \ref{t: main} we now prove Theorem \ref{t: main i}.

\subsection{Proof of  Theorem \ref{t: main i}} By Proposition \ref{p: main loc i} it suffices to estimate the global part $K=k_{\glo}$ under the assumptions of Theorem \ref{t: main i}. Using again the splitting
\[
K=K_1+K_2=\tK_1+\tK_2+(K_1-\tK_1)+(K_2-\tK_2),
\]
and Propositions \ref{pro approx trans} and \ref{pro diff trans} (with $q=p$) Theorem \ref{t: main} is reduced to the two propositions below. 
\begin{pro}
	\label{pro kap i}
	Assume that $m_B$ satisfies the assumptions 1) and 2) of Theorem \ref{t: main i}.
	Then we have $\|\ka^p\|_{\Cvp{\BA}}<\infty.$ 
\end{pro}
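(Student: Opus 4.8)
The plan is to mimic the proof of Proposition \ref{pro kap}, but now we must handle the fact that assumption 1) of Theorem \ref{t: main i} only gives us that $m_B(\cdot+i\rop)$ (rather than $[\omega m_B](\cdot+i\rop)$) is an $L^p(\BA)$ Fourier multiplier, which is a \emph{weaker} hypothesis on the large-$|\Re\zeta|$ behaviour; to compensate we have assumption 2), the Mikhlin--H\"ormander estimate valid for $|\Re\zeta|>1$. The starting point is again the decomposition \eqref{e kpdec}, $\ka^p=\sum_{\ell\ge 0}\ka_\ell^{\pm}$, coming from \eqref{e kaqpinv}, \eqref{e kaqpinv0} and the Harish-Chandra expansion \eqref{e ome'}. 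The point is that each $\ka_\ell^{\pm}$ is, up to the smooth compactly-supported-away-from-$\id$ weight $\eta_{\ell,\pm}$, the inverse Fourier transform of $\Gamma_\ell(\cdot+i\rop)\,\tm(\cdot+i\rop)$ (or its reflection). By \eqref{e GjMpnorm} the factor $\Gamma_\ell(\cdot+i\rop)$ is a uniformly (in $\ell$, up to $\ell^d$) bounded Fourier multiplier, so by Lemma \ref{lem Mel} and \eqref{e epmles} everything reduces, exactly as before, to showing
\[
\|\tm(\cdot+i\rop)\|_{\Mp{\BA}}<\infty.
\]

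The new ingredient is therefore the estimate of $\tm(\cdot+i\rop)=[\check\bc^{-1}m_B](\cdot+i\rop)$ as an $L^p$ Fourier multiplier, and here I would split this function dyadically (or just into a local and a global piece) in $\Re\la$. Pick a smooth even cut-off $\bC$ as already in the paper, equal to $1$ on $[-1,1]$ and supported in $[-2,2]$, and write
\[
\tm(\la+i\rop)=\bC(\alpha(\la))\,\tm(\la+i\rop)+(1-\bC(\alpha(\la)))\,\tm(\la+i\rop)=:\tm_{\loc}(\la)+\tm_{\glo}(\la).
\]
For the local piece $\tm_{\loc}$, I would use \eqref{f: HCest} (which bounds $\check\bc^{-1}$ and all its derivatives on the closed strip $0\le\Im\la\le|\rho|$, in particular at $\Im\la=\rop$) to reduce matters to $m_B(\la+i\rop)$ being a multiplier on $\R$: more precisely, $\tm_{\loc}(\la)=\bC(\alpha(\la))\,\check\bc^{-1}(\la+i\rop)\,m_B(\la+i\rop)$, and since $\bC(\alpha(\la))\check\bc^{-1}(\la+i\rop)$ is smooth, compactly supported, hence (by e.g.\ integrating its Fourier transform, or by Lemma \ref{lem Mel}) a pointwise multiplier of $\Mp{\BA}$, we get $\|\tm_{\loc}\|_{\Mp{\BA}}\lesssim\|m_B(\cdot+i\rop)\|_{\Mp{\BA}}$, which is finite by assumption 1). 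For the global piece $\tm_{\glo}$, supported in $|\alpha(\la)|\ge 1$, assumption 2) of Theorem \ref{t: main i} (extended to the boundary line $\Im\zeta=\rop$ by the maximum principle / non-tangential limits, exactly as \eqref{eq:MihR} was obtained in Proposition \ref{p: main loc i}) gives $|\partial^j m_B(\la+i\rop)|\lesssim|\la|^{-j}$ for $j=0,\dots,N$ on that set, and combining with \eqref{f: HCest} — which gives $|\partial^j\check\bc^{-1}(\la+i\rop)|\lesssim(1+|\la|)^{(n-1)/2-j}$ — would only yield polynomial \emph{growth}, not a Mikhlin condition. So instead I would \emph{not} try to bound $\tm_{\glo}$ directly as a multiplier; rather, I would return to the kernel-side argument and exploit the extra decay $e^{-\ell}$ and the weight $\eta_{\ell,\pm}$ together with the Lipschitz-type argument of Proposition \ref{pro kaq}.

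Concretely, the cleaner route is probably to run the two cases in parallel from the start: decompose $\ka^p=\ka^p\bC(\alpha(\log\cdot))+\ka^p(1-\bC(\alpha(\log\cdot)))$, bound the first (localized, but still away from $\id$ since $\ka^p$ vanishes near $\id$) summand using assumption 1) via the $\Gamma_\ell$-expansion and Lemma \ref{lem Mel} exactly as in Proposition \ref{pro kap} (with $\check\bc^{-1}$ now absorbing into the smooth localizing factor), and bound the second (the part of $\ka^p$ living where $|\alpha(\log a)|$ is large) by showing it is Lipschitz, using assumption 2), the integration-by-parts identity \eqref{e kaqform}, and the estimates \eqref{f: HCest}, \eqref{f: estIonescu} precisely as in the proof of Proposition \ref{pro kaq} — note that on the region $|\Re\zeta|>1$ assumption 2) of Theorem \ref{t: main i} is formally identical to \eqref{e kaq asum} with $q=p$, $\rho_q=\rho_p$, so that part of the argument transfers verbatim. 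Then $\|\ka^p\|_{\Cvp{\BA}}\le\|\ka^p\bC\|_{\Cvp{\BA}}+\|\ka^p(1-\bC)\|_{\Lip}\cdot C<\infty$, where the second term is handled by noting a Lipschitz kernel supported in a region where $|\alpha(\log a)|\ge 1$ is in $\Cvp{\BA}$ (combine \eqref{e kqinf}-type bounds with Lemma \ref{lem Mel}, or simply observe $\Lip\cap L^1$ control on such a kernel suffices after one more $D$-derivative estimate).

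The main obstacle I expect is precisely the interface between the two regimes: assumption 1) controls $m_B$ only on the boundary line $\Im\zeta=\rop$ and only as a multiplier (no pointwise derivative bounds for small $|\Re\zeta|$), while assumption 2) is silent for $|\Re\zeta|\le 1$ — so one must be careful that the cut-off splitting of $\ka^p$ (or of $\tm(\cdot+i\rop)$) does not create boundary terms that need information neither hypothesis provides. The key technical point making it work is that $\ka^p$ is \emph{already} supported away from $\id$ (because $(1-\eta)$ kills $|\alpha(\log a)|<2$), so the ``local in the multiplier variable / small $|\Re\zeta|$'' piece corresponds on the kernel side to a kernel that is merely a nice fixed Fourier multiplier times the $L^p$-multiplier $m_B(\cdot+i\rop)$, and the Mikhlin-type hypothesis 2) is only ever invoked where it is assumed valid. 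One should also double-check that $m_B(\cdot+i\rop)\in\Mp{\BA}$ together with 2) really does force the boundary-line derivative bound $|\partial^j m_B(\la+i\rop)|\lesssim|\la|^{-j}$ for $|\la|\ge 1$, which follows from holomorphy of $m_B$ in $T_p$, the uniform bound $|m_B|\le C$ there, and Cauchy estimates on shrinking disks — this is the analogue of the step in Proposition \ref{p: main loc i} and should be stated explicitly.
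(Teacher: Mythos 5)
Your opening analysis is on the right track and, in its first half, coincides with the paper's argument: the paper splits $\ka^p=\ka_1^p+\ka_2^p$ by inserting the \emph{frequency} cut-offs $\bC(\la)$ and $(1-\bC)(\la)$ under the inversion integral (see \eqref{e kapsplit}); the low-frequency piece is handled exactly as you describe for $\tm_{\loc}$, writing $\bC\tm=[\bC\check\bc^{-1}]m_B$ and using that a smooth compactly supported function is a pointwise multiplier of $\Mp{\BA}$, together with assumption 1), Lemma \ref{lem:locm}, the $\Gamma_\ell$-expansion, Lemma \ref{lem Mel} and \eqref{e GjMpnorm}; the high-frequency piece lives where $|\la|\ge 2$, so assumption 2) applies there, and $N$-fold integration by parts in $\la$ combined with \eqref{f: HCest} and \eqref{f: estIonescu} gives the pointwise bound $|\ka_2^p(a)|\lesssim (1-\eta(\ap))(\log \ap)^{-N}$, i.e.\ $\ka_2^p\in L^1(\BA)\subset\Cvp{\BA}$. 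Your remark that the boundary-line bounds $|\partial^j m_B(\la+i\rop)|\lesssim|\la|^{-j}$ for $|\la|>1$ must be extracted from holomorphy is a fair point and is needed here too.

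However, your final ``cleaner route'' replaces this by a cut-off in the \emph{spatial} variable, $\ka^p=\ka^p\,\bC(\al(\log\cdot))+\ka^p\,(1-\bC(\al(\log\cdot)))$, and this does not work: the two hypotheses of Theorem \ref{t: main i} are segregated in the frequency variable $\la$, not in $a$, so the compactly-supported-in-$a$ piece still involves $\tm(\la+i\rop)$ for \emph{all} $\la$. To show that piece is even bounded (hence $L^1$, being compactly supported) one must integrate by parts in $\la$, which requires pointwise derivative bounds on $m_B$ for $|\Re\zeta|\le 1$ --- precisely what neither assumption provides; and $\check\bc^{-1}(\la)$ cannot be ``absorbed into the smooth localizing factor'' because that factor is a function of $a$, not of $\la$. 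A second, smaller defect: for the high-frequency piece you lean on a Lipschitz bound and assert that a Lipschitz kernel supported in $\{|\al(\log a)|\ge 1\}$ lies in $\Cvp{\BA}$; this is false without decay (e.g.\ the indicator of a half-line is Lipschitz away from its endpoint and supported away from $\id$ but is not an $L^p$ convolutor for finite $p$). What the integration by parts actually delivers, and what is needed, is the $(\log\ap)^{-N}$ decay giving $L^1(\BA)$ membership; the Lipschitz estimate for $\ka^p$ is the content of the separate Proposition \ref{pro kaq i}, where it is used to control $K_j-\tK_j$ via Proposition \ref{pro diff trans}, not to prove $\ka^p\in\Cvp{\BA}$. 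If you keep the frequency splitting from your second paragraph and replace ``Lipschitz'' by ``$L^1$ via $N$-fold integration by parts'' for the piece carrying $(1-\bC)(\la)$, your argument becomes the paper's proof.
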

\begin{pro}
	\label{pro kaq i}
Assume that $m_B$ satisfies the assumptions 1) and 2) of Theorem \ref{t: main i}. 
	Then we have $\|\ka^p\|_{\Lip}<\infty.$ 
\end{pro}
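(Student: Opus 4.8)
The plan is to follow the proof of Proposition \ref{pro kaq} with $q=p$, the only genuinely new point being that assumption 2) of Theorem \ref{t: main i} controls the derivatives of $m_B$ solely in the region $|\Re\zeta|>1$. As in Proposition \ref{pro kaq}, by the mean value theorem it is enough to show that the multiplicative derivative $D\ka^p$ is bounded on $\BA$; since $\ka^p$ is supported away from $e_{\BA}$, I would treat $a\in\BA^+$ and $a\in\BA^-$ separately, using formula \eqref{e kaqpinv} (with $q=p$) in the first case and formula \eqref{e kaqpinv0} (with $q=p$) in the second.

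First I would dispose of $a\in\BA^-$. There the multiplier in \eqref{e kaqpinv0} is evaluated on $\R$, which lies strictly inside $T_p$; hence the Cauchy estimates applied to the bounded holomorphic function $m_B$ give $|\partial_\la^j m_B(\la)|\lesssim 1$ for all $\la\in\R$ and $j=0,\dots,N$, while assumption 2) of Theorem \ref{t: main i} gives $|\partial_\la^j m_B(\la)|\lesssim|\la|^{-j}$ for $|\la|>1$. Together these yield $|\partial_\la^j m_B(\la)|\lesssim(1+|\la|)^{-j}$ on all of $\R$, which is precisely the input used in the $\BA^-$ part of the proof of Proposition \ref{pro kaq}: one integrates by parts $N$ times in $\la$ in \eqref{e kaqpinv0}, applies Leibniz's rule, invokes \eqref{f: HCest} and \eqref{f: estIonescu}, and concludes, using $N>(n+3)/2$, that $|D\ka^p(a)|\lesssim 1$ for $a\in\BA^-$.

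The case $a\in\BA^+$ is the delicate one, because \eqref{e kaqpinv} forces us onto the boundary line $\Im\la=\rop$ of the critical $p$-strip---shifting the contour to any smaller height would leave an exponentially growing prefactor in $a$ that finitely many integrations by parts cannot absorb---and on that line $m_B$ is a priori only an $L^\infty$ function near $\la=0$. To deal with this, I would fix a smooth function $\vartheta$ on $\fra^*$ which equals $1$ on $\{|\la|\le 2\}$ and is supported in $\{|\la|\le 3\}$, and split $m_B(\cdot+i\rop)=\vartheta\,m_B(\cdot+i\rop)+(1-\vartheta)\,m_B(\cdot+i\rop)$, which induces a decomposition $\ka^p=\ka^{p,\loc}+\ka^{p,\glo}$ of the restriction of $\ka^p$ to $\BA^+$. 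For the high-frequency piece $\ka^{p,\glo}$ the $\la$-integral is supported in $\{|\la|\ge 2\}$, where $|\Re(\la+i\rop)|=|\la|>1$, so assumption 2) gives $|\partial_\la^j\big((1-\vartheta)\,m_B(\cdot+i\rop)\big)(\la)|\lesssim(1+|\la|)^{-j}$ for $j=0,\dots,N$; the argument from \eqref{e kaqform} onward in the proof of Proposition \ref{pro kaq} then applies without change and yields $|D\ka^{p,\glo}(a)|\lesssim 1$. For the low-frequency piece $\ka^{p,\loc}$ the $\la$-integral runs over the compact set $\{|\la|\le 3\}$, on which $\check\bc^{-1}(\la+i\rop)$ is bounded by \eqref{f: HCest}, while $|\om(\la+i\rop,a)|+|a\partial_a\om(\la+i\rop,a)|\lesssim 1$ on the support of $1-\eta$ by \eqref{f: estIonescu}, and $|m_B(\la+i\rop)|\le\|m_B\|_{L^\infty(T_p)}$; differentiating under the integral sign (so that $D=a\partial_a$ lands only on $a^{i\la}$, on $\om(\la+i\rop,a)$, and on the smooth cutoff $1-\eta$), every resulting term is dominated by $\int_{|\la|\le 3}(1+|\la|)\wrt\la\lesssim 1$, so $|D\ka^{p,\loc}(a)|\lesssim 1$. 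Adding the three contributions gives $\|\ka^p\|_{\Lip}<\infty$.

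I expect the main obstacle to be exactly this last point: on $\BA^+$ one cannot avoid the edge $\Im\la=\rop$ of the critical strip, where Theorem \ref{t: main i} assumes no pointwise control of $m_B$ near the origin, and the resolution is that the low-frequency part of $\ka^p$ lives on a compact set in the frequency variable and is therefore governed by the mere boundedness of $m_B$---which is precisely the feature that lets us drop the local hypothesis present in Ionescu's theorem. In particular, hypothesis 1) of Theorem \ref{t: main i} plays no role in this proposition; it enters only in Proposition \ref{pro kap i}. Finally, all estimates above are uniform in the Gaussian regularisation parameter $\vep$, so the customary $\vep\to 0^+$ limiting argument completes the proof.
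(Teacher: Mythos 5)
Your proposal is correct and follows essentially the same route as the paper: the smooth frequency cutoff $\vartheta$ splitting $\ka^p$ into a low-frequency piece (compact $\la$-support, controlled by mere boundedness of $m_B$ on $T_p$ plus \eqref{f: HCest} and \eqref{f: estIonescu}) and a high-frequency piece (handled by $N$ integrations by parts using assumption 2)) is exactly the paper's decomposition \eqref{e kapsplit} into $\ka_1^p+\ka_2^p$ via the cutoff $\bC$. Your handling of $\BA^-$ by Cauchy estimates rather than by the same splitting is an immaterial variation, and your observation that hypothesis 1) is not needed here matches the paper's proof.
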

In the proofs of the above propositions we shall need the splitting
\begin{equation}
	\label{e kapsplit}
			\ka^p(a)=\ka_1^p(a)+\ka_2^p(a)
\end{equation}
where
\begin{align*}
	&\ka_1^p(a)\\
	&=\begin{cases}
			\eta_+(a)\int_{\fra^*} a^{i\la}\bC(\la)\, \big[1+\om(\la+i\rop,a) \big] \, \tm (\la+i\rop)  \wrt \la,\quad a\in \BA^+\\
		\eta_-(a)\int_{\fra^*} a^{-i\la}\bC(\la)\, \big[1+\om(\la,a^{-1}) \big] \, \tm (\la)  \wrt \la,\quad a\in \BA^-.
	\end{cases}
\end{align*}
and
\begin{align*}
	&\ka_2^p(a)\\
	&=\begin{cases}
	\eta_+(a)\int_{\fra^*} a^{i\la}(1-\bC)(\la)\, \big[1+\om(\la+i\rop,a) \big] \, \tm (\la+i\rop)  \wrt \la,\quad a\in \BA^+\\
			\eta_-(a)\int_{\fra^*} a^{-i\la}(1-\bC)(\la)\, \big[1+\om(\la,a^{-1}) \big] \, \tm (\la)  \wrt \la,\quad a\in \BA^-.
	\end{cases}
\end{align*}
Recall that $\eta_{\pm}$ are defined by \eqref{e epm} while  $\bC$ is a smooth even function on $\BR$
that is equal to~$1$ on $[-2,2]$ and vanishes outside the interval $[-4,4]$. The splitting \eqref{e kapsplit} follows from \eqref{e kaqpinv0} (for $a\in \BA^-$) and \eqref{e kaqpinv} (for $a\in \BA^+$) with $q=p.$

\begin{proof}[Proof of Proposition \ref{pro kap i}]
	By \eqref{e kapsplit} it is enough to estimate $\|\ka_j^p\|_{\Cvp{\BA}},$ $j=1,2.$ 
	
	We start with $\ka_2^p.$ Integrating by parts $N$ times in $\lambda$ we see that
	\begin{align*}
		&\ka_2^p(a)\\
		&=\frac{\eta_+(a)(-1)^N}{(\log a)^N}\int_{\fra^*} a^{i\la}\partial_{\la}^N\left((1-\bC)(\la)\, \big[1+\om(\la+i\rop,a) \big] \, \tm (\la+i\rop) \right) \wrt \la
	\end{align*} 
for $a\in \BA^+,$ and
	\begin{align*}
	\ka_2^p(a)=\frac{\eta_-(a)(-1)^N}{(\log a)^N}\int_{\fra^*} a^{i\la}\partial_{\la}^N\left((1-\bC)(\la)\, \big[1+\om(\la,a) \big] \, \tm (\la) \right) \wrt \la,\qquad a \in \BA^-.
\end{align*} 
Using assumption 1) from Theorem \ref{t: main i} together with \eqref{f: HCest}, \eqref{f: estIonescu} we reach 
	$$|\ka_2^p(a)|\lesssim \frac{(1-\eta(\ap))}{(\log \ap)^N}.$$ This shows that $\ka_2^p\in L^1(\BA)$ so that $\|\ka_2^p\|_{\Cvp{\BA}}<\infty.$

	It remains to estimate $\|\ka_1^p\|_{\Cvp{\BA}}.$ We use \eqref{e ome'} and expand $\ka_1^p=\sum_{\ell=0}^{\infty} \ka_{1,\ell}^p$ where
	\begin{align*}
		\ka_{1,\ell}^p(a)=&
			\eta_{+,\ell}(a)\int_{\fra^*} a^{i\la}\Gamma_{\ell}(\la+i\rop)  \, \bC(\la) \tm (\la+i\rop)  \wrt \la\\
			&+
			\eta_{-,\ell}(a)\int_{\fra^*} a^{-i\la}\Gamma_{\ell}(\la) \, \bC(\la) \tm  (\la)\wrt \la,\qquad a\in \BA.
	\end{align*}
with $\eta_{\pm,\ell}$ given by \eqref{e epm}. Thus, using Lemma \ref{lem Mel}, \eqref{e epmles}, and  \eqref{e GjMpnorm} we obtain
\begin{equation}
	\label{e k1lpcvp}
	\|\ka_{1,\ell}^p\|_{\Cvp{\BA}}\lesssim \ell^d e^{-\ell}(\|\bC \tm\|_{\Mp{\BA}}+\|\bC \tm(\cdot +i\rop)\|_{\Mp{\BA}}).
\end{equation}
Since $\bC$ is a compactly supported smooth function so are $\bC(\la) \check\bc^{-1}(\la)$ and $\bC(\la) \check\bc^{-1}(\la+i\rop).$  Recalling \eqref{e tmbc} we write $\bC(\la)\tm(\la)=[\bC(\la)\check\bc^{-1}(\la)]m_B(\la)$ and $$ \bC(\la)\tm(\la+i\rop)=[\bC(\la)\check\bc^{-1}(\la+i\rop)]m_B(\la+i\rop)$$ and estimate
\[
\|\bC \tm\|_{\Mp{\BA}}+\|\bC \tm(\cdot +i\rop)\|_{\Mp{\BA}}\lesssim \|m_B\|_{\Mp{\BA}}+\|m_B(\cdot +i\rop)\|_{\Mp{\BA}}.
\]
Here we used the observation that multiplication by a smooth compactly supported function preserves the class of $L^p(\R)$ multipliers, see \cite[Theorem 2.8]{MW} ii) and \cite{Co}. 
Now, coming back to \eqref{e k1lpcvp} and using Lemma \ref{lem:locm} together with the assumption 1) from Theorem \ref{t: main i} we reach
\[
\|\ka_{1,\ell}^p\|_{\Cvp{\BA}}\lesssim \ell^d e^{-\ell}\|m_B(\cdot +i\rop)\|_{\Mp{\BA}}\lesssim  \ell^d e^{-\ell}.
\]
Summing over $\ell=0,1,2,\ldots,$ we obtain $\|\ka_1^p\|_{\Cvp{\BA}}<\infty,$ as desired.

This completes the proof of  Proposition \ref{pro kap i}.
	\end{proof}

\begin{proof}[Proof of Proposition \ref{pro kaq i}]
	Similarly to the proof of  Proposition \ref{pro kaq}, by the mean value theorem it is enough to show that $D \ka^p$ is a bounded function  on $\BA.$ We estimate separately the terms $D \ka_j^p$ $j=1,2,$ in \eqref{e kapsplit}.

	We start with $\ka^p_1$ and consider first $a\in \BA^+.$ Using Leibniz rule we obtain a formula similar to \eqref{e kaqform} with the addition of $\bC(\la)$ under the integral 
	\begin{equation*}
		\begin{split}
			D\ka_1^p(a)=&2 D(1-\eta)(a)\int_{\fra^*} a^{i\la}\,\bC(\la)\,  \big[1+\om(\la+i\rop,a) \big] \, \tm(\la+i\rop)  \wrt \la\\
			&+ 2(1-\eta)(a) \int_{\fra^*} a^{i\la}\,\bC(\la)\,  i\la\big[1+\om(\la+i\rop,a) \big] \, \tm(\la+i\rop)\wrt \la\\
			&+ 2(1-\eta)(a)\int_{\fra^*} a^{i\la}\,\bC(\la)\, D\big[1+\om(\la+i\rop,a) \big] \, \tm(\la+i\rop)\wrt \la.
		\end{split}
	\end{equation*}
Since $m_B$ is a bounded function on $T_p,$ using \eqref{f: HCest}, \eqref{f: estIonescu} we thus see that
\[
|D\ka_1^p(a)|\lesssim 1.
\]
If $a\in \BA^-$ then we proceed as above using the corresponding definition of $\ka^p_1(a)$.

We now move to the estimate for $D\ka_2^p(a).$ Consider first the case $a\in \BA^+$. Here we integrate by parts $N$ times in $\la$ and then apply Leibniz rule. This gives a formula similar to \eqref{e kaqform} with the additional term $1-\bC(\la)$ under the integral
\begin{equation*}
	\begin{split}&D\ka^p_2(a)\\
		=&2(-i)^N D\left(\frac{(1-\eta(a))}{(\log a)^N}\right)\int_{\fra^*} a^{i\la}\,  \partial_{\la}^N\left((1-\bC(\la))\big[1+\om(\la+i\rop,a) \big] \, \tm(\la+i\rop)\right)   \wrt \la\\
		&+ 2(-i)^N\frac{(1-\eta(a))}{(\log a)^N} \int_{\fra^*} a^{i\la}\, i\la\,\partial_{\la}^N\left((1-\bC(\la))\big[1+\om(\la+i\rop,a) \big] \, \tm(\la+i\rop)\right)\\
		&+ 2(-i)^N\frac{(1-\eta(a))}{(\log a)^N}\int_{\fra^*} a^{i\la}\,D\partial_{\la}^N\left((1-\bC(\la))\big[1+\om(\la+i\rop,a) \big] \, \tm(\la+i\rop)\right).
	\end{split}
\end{equation*}
In the case when $a\in \BA^-$ we obtain a similar formula with $(1-\eta(a))(\log a)^{-N}$ replaced by $(1-\eta(\ap))(\log \ap)^{-N}a^{2\rho/p}$ and $\la +i\rop$ replaced by $\la$ in relevant places under the integral. In both cases, using assumption 2) of Theorem \ref{t: main i} together with \eqref{f: HCest}, \eqref{f: estIonescu} we reach
\[
|D\ka_2^p(a)|\lesssim 1,\qquad a\in \BA.
\]
This completes the proof of Proposition \ref{pro kaq i}.

\end{proof}

\subsection*{Acknowledgments}
The author is most grateful to Stefano Meda for a suggestion of study along the lines of the main theorems,  many discussions on the subject and helpful remarks. The research was supported by National Science Centre, Poland (NCN), research project 2018\slash 31\slash B\slash ST1\slash 00204.

\end{document}